\theoremstyle{plain}
\newtheorem{thm}{Theorem}[section]
\newtheorem{prop}[thm]{Proposition}
\newtheorem{lemma}[thm]{Lemma}
\newtheorem{cor}[thm]{Corollary}
\theoremstyle{definition}
\newtheorem{defi}[thm]{Definition}
\newtheorem{remark}[thm]{Remark}
\newtheorem{ex}[thm]{Example}
\newcommand{\RR}{\ensuremath{\mathbb R}}
\newcommand{\g}{\ensuremath{\mathfrak{g}}}
\definecolor{forest}{rgb}{0,0.5,0}
\begin{document}

\title{On the extension problem for weak moment maps}

\author{Leyli Mammadova}

\address{L. Mammadova, KU Leuven, Department of Mathematics, Celestijnenlaan 200B box 2400, BE-3001 Leuven,
Belgium.
\newline }\email{leyli.mammadova@kuleuven.be}
  
 \author{Leonid Ryvkin}
\address{L. Ryvkin, IMJ-PRG, Universit\'e Paris Diderot, B\^atiment Sophie Germain, 8 Place Aur\'elie Nemours, 75013 Paris, France.\newline
Also affiliated to: Faculty of Mathematics, Universit\"at Duisburg-Essen, Essen, Germany.\newline}
\email{leonid.ryvkin@imj-prg.fr}

\date{\today}
 
\subjclass[2010]{Primary: 53D20
\\
Keywords: multisymplectic geometry; n-plectic geometry; moment map}
\begin{abstract}
	We compare  existence and equivariance phenomena for weak moment maps and homotopy moment maps in multisymplectic geometry.
\end{abstract}

\maketitle
\date{today}

\setcounter{tocdepth}{1} 
\tableofcontents

\section*{Introduction}
Multisymplectic (or $n$-plectic) geometry is a generalization of symplectic geometry, where $n=1$ corresponds to the symplectic case. Just as symplectic geometry has its origins in classical mechanics, multisymplectic geometry arose from classical field theory in the works of W. M. Tulczyjew, J. Kijowski and W. Szczyrba (see \cite{kijowski1973finite} and \cite{kijowski1975multisymplectic}). In \cite{cantrijn1999geometry}, F. Cantrijn,
A. Ibort, and M. de Le\'{o}n investigated multisymplectic manifolds from purely geometric viewpoint. Since then, there have been multiple attempts at generalizing the notion of a moment map from symplectic to multisymplectic geometry. Moment maps are an important concept in symplectic geometry: they formalize the Noether principle and have applications like the Marsden-Weinstein reduction, the Atiyah-Guillemin-Sternberg convexity theorem, and classification of toric manifolds.\\
In this paper, we compare two generalizations of symplectic moment maps to $n$-plectic geometry. 
One of them is the \emph{homotopy moment map} introduced in \cite{FRZ} by M. Callies, Y. Fregier, C. L. Rogers and M. Zambon. This map is an $L_{\infty}$-morphisms between the Lie algebra $\g$ acting on $(M,\omega)$ and the $L_{\infty}$-algebra $L_{\infty}(M,\omega)$ associated to $(M,\omega)$. The other map is the \emph{weak moment map} introduced in \cite{MR3815227, herman2018noether} by J. Herman, extending the notion introduced by T. B. Madsen and A. Swann in \cite{zbMATH06016606} and \cite{madsen2013closed}. Components of this map, restricted to  certain subspaces of $\Lambda^k \g$, are required to satisfy the first $n$ equalities satisfied by the components of a homotopy moment map. Hence, a homotopy moment map induces a weak moment map. In this paper, we answer some questions posed in \cite{MR3815227}, including the question of the reverse implication.\\
This paper is organized as follows. 
We begin with background material on multisymplectic geometry, Lie group/algebra actions and Cartan calculus in Section 1. In Section 2, we investigate the question of whether the existence of a weak moment map implies the existence of a homotopy moment map. This question is answered in Theorem 2.8, which is the main result of Section 2. Section 3 addresses questions of existence of equivariant homotopy (weak) moment maps using the framework of Section 2 and reproves a result from \cite{MR3815227}  extending it to the case of homotopy moment maps.

\bigskip
\paragraph{\bf Acknowledgements:} We thank Marco Zambon for helpful conversations and comments. L. M. acknowledges the support of the long term structural funding -- Methusalem grant of the Flemish Government. L. R. was supported by the Ruhr University Research School PLUS, funded by Germany's Excellence Initiative [DFG GSC 98/3] and the PRIME programme of the German Academic
Exchange Service (DAAD) with funds from the German Federal Ministry of Education and Research
(BMBF).

\section{Background}
In this section, we recall the relevant notions and results from multisymplectic geometry, Lie group/algebra actions and Cartan calculus.

\subsection{Lie group and Lie algebra actions} Let $G$ be a Lie group acting on a manifold from the left. 
\begin{defi} The vector field  \[v_x|_m=\frac{d}{dt}exp(-tx)\cdot m|
	_{t=0}, \forall m\in M \]
is called the \emph{infinitesimal generator} of the action corresponding to $x\in \g$.
\end{defi}

\begin{defi} Let $v_{x_i}$ be the infinitesimal generator corresponding to $x_i\in \g.$ Then for $p:=x_1\wedge...\wedge x_k\in \Lambda^k\g$, we call  $v_p:=v_{x_1}\wedge...\wedge v_{x_k}$ \emph{the infinitesimal generator of the action} corresponding to $p.$
\end{defi}

Next, we introduce the Lie algebra homology differential, which we will need in particular to define homotopy moment maps.
\begin{defi}
	Let $\g$ be a Lie algebra. The map $\delta_k:\Lambda^k\g \to \Lambda^{k-1}\g$ defined by
	\begin{equation}\label{def:homdiff}
	\delta_k: x_1\wedge ...\wedge x_k \mapsto \sum\limits_{1\leq i<j\leq k}(-1)^{i+j}[x_i,x_j]\wedge x_1\wedge ...\widehat{x_{i}}\wedge ...\wedge \widehat{x_j}\wedge...x_k,
	\end{equation}
	where $k\geq 1$ and $x_i \in \g$, is called \emph{k-th Lie algebra homology differential} of $\g$. 
	
\end{defi}
We recall the following definition from \cite{madsen2013closed}.
\begin{defi}
	The kernel of $\delta_k$ is called \emph{the k-th Lie kernel} of $\g$.
\end{defi}

We will denote the $k$-th Lie kernel of $\g$ by  $P_{k,\g}:=ker\delta_k$ and the direct sum of all the Lie kernels by $P_{\g}:=\bigoplus^{dim\hspace{0.02cm}\g}_{k=0}P_{k,\g}$. We will also be interested in the subspace $P_{\geq 1,\g}:=\bigoplus^{dim\hspace{0.02cm}\g}_{k=1}P_{k,\g}$.

\begin{remark}\label{morphism}
	Let $\g$ act on manifold $M$, and let $\bar{\delta_k}:\Gamma(\Lambda^k(TM))\to \Gamma(\Lambda^{k-1}(TM))$ be defined analogously to \eqref{def:homdiff}. Then, as $\mathfrak g\to \mathfrak X(M)$ is a Lie algebra homomorphism, we have $\bar{\delta_k} v_p=v_{\delta_k p}$  for $p\in \Lambda^k\g$. In particular, for $p\in P_{\g}$, $\bar{\delta_k}v_p=v_{\delta_k p}=0$.
\end{remark}

The following formula is known as the "Extended Cartan Formula" and can be found in \cite[Lemma 3.4]{madsen2013closed}.

\begin{lemma} \label{cartan}
	Let $\alpha\in \Omega^m(M)$. Then for all $k\geq 2$ and all vector fields $v_1,...,v_k,$ we have:
	\begin{align*}
	(-1)^kd\iota(v_1\wedge...\wedge v_k)\alpha = & \iota(\bar{\delta_k}(v_1\wedge...\wedge v_k))\alpha\\
	& +\sum_{i=1}^{k}(-1)^i\iota(v_1\wedge...\wedge\hat{v_i}...\wedge v_k)\pounds_{v_i}\alpha\\
	& +\iota(v_1\wedge...\wedge v_k)d\alpha.
	\end{align*}
\end{lemma}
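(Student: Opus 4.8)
The plan is to prove the identity by induction on $k$, using only the three classical relations of Cartan calculus: the magic formula $\pounds_v=d\iota(v)+\iota(v)d$, the commutation rule $d\pounds_v=\pounds_v d$, and the bracket identity $\pounds_v\iota(w)-\iota(w)\pounds_v=\iota([v,w])$. Although the statement is phrased for $k\geq 2$, the natural base case is $k=1$: there $\bar\delta_1=0$, the sum over omitted indices has only one term, and the asserted equality collapses to $-d\iota(v_1)\alpha=-\pounds_{v_1}\alpha+\iota(v_1)d\alpha$, i.e.\ the classical Cartan formula. So the base case is free.

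For the inductive step, assume the formula for $k$ vector fields and deduce it for $k+1$. Fixing the convention $\iota(w_1\wedge\cdots\wedge w_p)=\iota(w_p)\circ\cdots\circ\iota(w_1)$, one has $\iota(v_1\wedge\cdots\wedge v_{k+1})\alpha=\iota(v_2\wedge\cdots\wedge v_{k+1})\beta$ for $\beta:=\iota(v_1)\alpha\in\Omega^{m-1}(M)$. Apply $d$ and invoke the inductive hypothesis for $\beta$ and the fields $v_2,\dots,v_{k+1}$, then substitute $\beta=\iota(v_1)\alpha$ back in: use $d\beta=d\iota(v_1)\alpha=\pounds_{v_1}\alpha-\iota(v_1)d\alpha$ in the $d\beta$-term, and $\pounds_{v_i}\beta=\iota(v_1)\pounds_{v_i}\alpha+\iota([v_i,v_1])\alpha$ in each summand of the $\pounds$-term. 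Collecting: the pieces $\iota(v_1)\pounds_{v_i}\alpha$ for $i\geq 2$ together with the $\pounds_{v_1}\alpha$-contribution of $d\beta$ assemble the full sum $\sum_{i=1}^{k+1}(-1)^i\iota(v_1\wedge\cdots\widehat{v_i}\cdots v_{k+1})\pounds_{v_i}\alpha$; the $\iota(v_1)d\alpha$-contribution of $d\beta$ gives $\iota(v_1\wedge\cdots\wedge v_{k+1})d\alpha$; and the bracket terms $\iota([v_i,v_1])\alpha$ for $i\geq 2$, together with $\iota\big(\bar\delta_k(v_2\wedge\cdots\wedge v_{k+1})\big)\beta$, recombine --- upon splitting the sum \eqref{def:homdiff} defining $\delta_{k+1}$ into the pairs $\{1,j\}$ and the pairs $\{i,j\}$ with $i\geq 2$ --- into $\iota\big(\bar\delta_{k+1}(v_1\wedge\cdots\wedge v_{k+1})\big)\alpha$. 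Absorbing the single overall sign turns $(-1)^k$ into $(-1)^{k+1}$ on the left, which closes the induction.

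The entire content is the sign bookkeeping, and the genuinely delicate point is this last recombination: one must verify that, after moving each $\iota([v_i,v_1])$ into the proper slot of the remaining interior product and using $[v_i,v_1]=-[v_1,v_i]$, the signs produced match the $(-1)^{i+j}$ of \eqref{def:homdiff}, while $\bar\delta_k$ applied to $v_2\wedge\cdots\wedge v_{k+1}$ supplies exactly the pairs with $2\leq i<j$. An equivalent, perhaps cleaner, route avoids induction altogether: write $\iota(v_1\wedge\cdots\wedge v_k)=\pm\,\iota(v_1)\cdots\iota(v_k)$, apply the graded Leibniz rule for the commutator $[d,-]$ together with the magic formula to obtain $\sum_i\pm\,\iota(v_1)\cdots\pounds_{v_i}\cdots\iota(v_k)$, and then normal-order each $\pounds_{v_i}$ to the far right using $\pounds_{v_i}\iota(v_j)=\iota(v_j)\pounds_{v_i}+\iota([v_i,v_j])$; the terms retaining a $\pounds$ reproduce the middle sum and the $d\alpha$-term, and the terms in which a bracket has been created reproduce $\iota(\bar\delta_k(v_1\wedge\cdots\wedge v_k))$. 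Either way, the combinatorics of signs is the whole proof.
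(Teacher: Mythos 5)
Your proof is correct, but there is nothing in the paper to compare it against: the paper does not prove this lemma at all, it simply quotes it as the ``Extended Cartan Formula'' with a citation to Lemma~3.4 of Madsen--Swann (\cite{madsen2013closed}), where the argument is essentially the induction you describe. Your base case $k=1$ is indeed the magic formula (with $\bar\delta_1=0$ and $\iota$ of the empty wedge equal to the identity), and the inductive step goes through as you say: with the convention $\iota(w_1\wedge\cdots\wedge w_p)=\iota(w_p)\circ\cdots\circ\iota(w_1)$ one has $\iota(v_1\wedge\cdots\wedge v_{k+1})=\iota(v_2\wedge\cdots\wedge v_{k+1})\circ\iota(v_1)$, and the substitutions $d\beta=\pounds_{v_1}\alpha-\iota(v_1)d\alpha$ and $\pounds_{v_i}\iota(v_1)=\iota(v_1)\pounds_{v_i}+\iota([v_i,v_1])$ produce exactly the three groups of terms you list. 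I checked the one point you rightly flag as delicate: splitting $\delta_{k+1}$ into the pairs $\{1,j\}$ and the pairs $\{i,j\}$ with $i\geq 2$, the former contribute $(-1)^{1+j}[v_1,v_j]=(-1)^{j}[v_j,v_1]$ in the correct slot, matching your sum $\sum_{j\geq 2}(-1)^{j}\iota(v_2\wedge\cdots\widehat{v_j}\cdots)\iota([v_j,v_1])\alpha$, while the latter reproduce $-\iota(\bar\delta_k(v_2\wedge\cdots\wedge v_{k+1}))\iota(v_1)\alpha$ after one uses the anticommutativity $\iota([v_i,v_j])\iota(v_1)=-\iota(v_1)\iota([v_i,v_j])$ to absorb the extra sign --- you should make that single anticommutation explicit, as it is the only step not spelled out in your sketch. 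Your alternative normal-ordering argument is also valid and arguably cleaner. Either version would serve as a self-contained replacement for the citation.
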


\subsection{Lie algebra cohomology}

We briefly recall some notions from the realm of Lie algebra cohomology with values in a module. For a more detailed and systematic exposition, we refer the reader to \cite{MR1269324}.\\
Let $\g$ be a Lie algebra and $\mathcal M$ a $\g$-module, i.e. a vector space equipped with a Lie algebra homomorphism map $a:\g\to End(\mathcal M),~a\mapsto a_x$. When no confusion is possible, we will write $x\cdot m$ for $a_x(m)$. We equip the space $\Lambda\mathfrak g^*\otimes \mathcal M$ with a differential $d_{\g,\mathcal M}$ as follows. For $\alpha \in \Lambda^k\mathfrak g^*\otimes \mathcal M$, $d^k_{\g,\mathcal M}(\alpha)=d_{\g, \mathcal M}(\alpha)\in \Lambda^{k+1}\mathfrak g^*\otimes \mathcal M$ is given by
\begin{align*}
d_{\g,\mathcal M}(\alpha)(x_1\wedge ...\wedge x_{k+1})=\alpha(\delta(x_1\wedge ...\wedge x_{k+1}))+\sum_{i=1}^{k+1}(-1)^{i+1}x_i\cdot \alpha(x_1\wedge ...\wedge \hat x_i\wedge ...\wedge x_{k+1}).
\end{align*}
This differential squares to zero, so we can define its cohomology groups:

\begin{defi}
	We define the \emph{$k$-th Lie algebra cohomology group of $\g$ with values in $\mathcal M$} as
	\[
	H^k(\g,\mathcal M)=\frac{ker(d^k_{\g,\mathcal M})}{Im(d^{k-1}_{\g,\mathcal M})}.
	\]
\end{defi}

The following two special cases will be of particular interest in the sequel.
\begin{ex}
	In case $\mathcal M=\mathbb R$ is the trivial $\g$-module $d_\g:=d_{\g, \mathbb R}$ is the dual of $\delta$ and we recover Chevalley-Eilenberg cohomology $H^k(\mathfrak g, \mathbb R)=H^k(\mathfrak g)$.
\end{ex}

\begin{ex}
	The group $H^0(\g, \mathcal M)=ker(d_{\g,\mathcal M}^1)\subset \mathcal M$ is just the subspace of $\mathcal M^\g$ of $\g$-invariant elements in $\mathcal M$.
\end{ex}

\subsection{Multisymplectic geometry} In this subsection, we recall the relevant notions from multisymplectic geometry. 
We begin with the definition of a multisymplectic manifold.
\begin{defi}
	A pair $(M,\omega)$ is an \textit{n}\emph{-plectic} manifold, if  $\omega$ is a closed nondegenerate $n+1$ form, i.e., \[d\omega=0\] 
	
	\noindent and the map $\iota{\_}\omega:TM\to \Lambda^{n}T^{\ast}M, v\mapsto \iota_{v}\omega$
	is injective.\end{defi}
Thus, a symplectic manifold is a 1-plectic manifold. As in symplectic geometry, in $n$-plectic geometry there is also a notion of Hamiltonian vector fields.
\begin{defi}
	An $(n-1)$-form $\alpha$ on an $n$-plectic manifold $(M,\omega)$ is \emph{Hamiltonian} if there exists a vector field $v_{\alpha}\in\mathfrak{X}(M)$ such that \[d\alpha=-\iota_{v_{\alpha}}\omega.\]
	The vector field $v_{\alpha}$ is the \emph{Hamiltonian vector field} corresponding to $\alpha$.
\end{defi}

The space of smooth functions $C^{\infty}(M)$ on a symplectic manifold forms a Lie (Poisson) algebra. An $n$-plectic manifold is canonically equipped with a Lie $n$-algebra $L_{\infty}(M,\omega)$ defined in the following way \cite[Thm. 5.2]{RogersL}:

\begin{defi}
	Given an $n$-plectic manifold, there is a corresponding Lie $n$-algebra $L_{\infty}(M,\omega)=(L,\{[\;,\;...\;,\; ]'_k\})$ with underlying graded vector space 
	\[
	L_i=\begin{cases}
	\Omega_{\textrm{Ham}}^{n-1}(M)\hspace{0.5cm}i=0\\
	\Omega^{n-1+i}(M)\hspace{0.2cm}1-n\leq i<0
	\end{cases}
	\]
	and maps $\{[\;,\;...\;,\; ]'_k:L^{\otimes k}\to L\;|\;1\leq k<\infty\} $ defined as 
	\[[\alpha]'_1=d\alpha,\hspace{0,2cm}\textrm{if }|\alpha|<0\]
	and, for $k>1$, \[
	[\alpha_{1},\;...\;,\alpha_{k} ]'_k=\begin{cases}
	\zeta(k)\iota(v_{\alpha_{1}}\wedge...\wedge v_{\alpha_{k}})\omega \hspace{0.2cm}\textrm{if }|\alpha_{1}\otimes...\otimes\alpha_{k}|=0\\
	0\hspace{3.7cm}\textrm{if }|\alpha_{1}\otimes...\otimes\alpha_{k}|<0,
	\end{cases}
	\]
	where $v_{{\alpha}_i}$ is any Hamiltonian vector field associated to $\alpha_i\in{\Omega}_{\textrm{Ham}}^{n-1}(M)$, $\zeta(k)=-(-1)^{\frac{k(k+1)}{2}}$, 
	and $\iota(\dots)$ denotes contraction with a multivector field.
\end{defi}

\begin{remark}
A \emph{Lie $n$-algebra} is an $L_{\infty}$-algebra concentrated in degrees $0,-1,...,1-n$. 
For definition and properties of $L_{\infty}$-algebras see \cite{lada1993introduction}.
\end{remark}
\begin{remark}
For $n=1$ in the above definition, we get the Lie algebra $C^{\infty}(M)$ associated to a symplectic manifold $(M,\omega)$.
\end{remark}

\subsection{Homotopy moment maps and weak moment maps} In this subsection, we  introduce the main characters of this paper: the homotopy moment map and the weak moment map.

Following \cite{FRZ} we recall:
\begin{defi} \label{def:gmomap}
	Let {$\g\to \mathfrak{X}(M), {x\mapsto v_x}$} be a Lie algebra action on an $n$-plectic manifold $(M,\omega)$ by Hamiltonian vector fields. 
	A \emph{homotopy moment map} for this action is  an $L_{\infty}$-morphism 
	\begin{equation*}
	(\widetilde f_k):\g\to L_{\infty}(M,\omega)
	\end{equation*} 
	such that \begin{equation*}-\iota_{v_{x}}\omega=d(\widetilde{ f}_1(x))\hspace{0.1cm},\forall x\in\g.\end{equation*}

	This is equivalent to a collection of $n$ linear maps 
	\begin{align*}
	\widetilde f_1&:\g\to \Omega_{\textrm{Ham}}^{n-1}(M)\\
	\widetilde f_k&:\Lambda^k \g\to \Omega^{n-k}(M), \hspace{0.1cm}2\leq k\leq n
	\end{align*}
	such that 
	\begin{equation} \label{homotopymomap}
	-\widetilde f_{k-1}(\delta p)=d \widetilde f_k(p)+\zeta(k)\iota_{v_p}\omega
	\end{equation}
	
	for $1\leq k \leq n+1$, where $\widetilde f_0, \widetilde f_{n+1}\equiv 0$, and $v_p$ is the infinitesimal generator of the action corresponding to $p\in \Lambda^k\g$. 
	
\end{defi}
   
The next definition is taken from \cite[Def.3.11]{herman2018noether}.
\begin{defi} Let {$\g\to \mathfrak{X}(M), {x\mapsto v_x}$} be a Lie algebra action on an $n$-plectic manifold $(M,\omega)$ by Hamiltonian vector fields. 
A \emph{weak moment map} is a collection of linear maps $\widehat f_k:P_{k,\g}\to \Omega^{n-k}(M)$, where $1\leq k \leq n$, satisfying \[d\widehat f_k(p)=-\zeta(k)\iota_{v_p}\omega \]
for $k\in 1,...,n$ and all $p\in P_{k,\g}$.
\end{defi} 
\begin{remark}
We stress that we have $n$ equations in the definition of a weak moment map, while a homotopy moment map has to obey $n+1$ equations. 
\end{remark} 

\section{Existence}

Every homotopy moment map induces a weak moment map by restriction to the Lie kernel, i.e., if there is a homotopy moment map for a given Lie algebra action, there is also a weak moment map.
In this section, we will show that under an additional assumption the converse is also true. Throughout this section, $(M,\omega)$ is an $n$-plectic manifold, and $\g$ acts on $(M,\omega)$  preserving $\omega$. 

\subsection{Existence of homotopy moment maps}
 We recall the existence result for homotopy moment maps from \cite{FLRZ} and \cite{LeonidTillmannComoment}. We consider the double complex 
 \begin{equation} \label{big complex}
 (\Lambda^{\geq 1}\g^{*}\otimes\Omega(M),d_{\g},d)
 \end{equation}
 where $d_{\g}$ is the Chevalley-Eilenberg differential of $\g$, and $d$ is the de Rham differential of $M$. Let $\widetilde C$ be the total complex with differential 
 \begin{equation} \label{dtot}
 \widetilde d_{tot}:=d_{\g}\otimes1+1\otimes d
 \end{equation}
 
 Define \begin{align*}
 \omega_k: \Lambda^k \mathfrak g&\to \Omega^{n+1-k}(M)\\
 p&\mapsto \iota_{v_p}\omega
 \end{align*} 
 and 
 \[\widetilde{\omega}:=\sum_{k=1}^{n+1}(-1)^{k-1}\omega_k \]
 
 We recall the following result from \cite{FLRZ} and \cite{LeonidTillmannComoment}.
 
 \begin{prop} \label{characterizationhomotopymomap}\label{prop:hotexists}
 	Let $\widetilde\alpha:=\widetilde\alpha_1+...\widetilde\alpha$, with $\widetilde\alpha_k \in \Lambda^k\g^{*}\otimes \Omega^{n-k}(M).$ Then $\widetilde d_{tot}\widetilde\alpha=\widetilde{\omega}$ if and only if
 	\[\widetilde  f_k:=\zeta(k)\widetilde\alpha_k:\Lambda^k\mathfrak g\to\Omega^{n-k}(M),~~~	k=1,...,n, \]
 are the components of a homotopy moment map for the action of $G$ on $(M,\omega).$
 \end{prop}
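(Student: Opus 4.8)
The plan is to prove the equivalence by decomposing the single equation $\widetilde d_{tot}\widetilde\alpha=\widetilde\omega$ into its bihomogeneous components and identifying each component, after a harmless rescaling, with one of the $n+1$ defining equations \eqref{homotopymomap} of a homotopy moment map. There is essentially no conceptual content beyond careful sign bookkeeping.

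First I would use that $\widetilde d_{tot}=d_\g\otimes 1+1\otimes d$ splits the equation by bidegree. Writing $\widetilde\alpha=\sum_{k=1}^{n}\widetilde\alpha_k$ with $\widetilde\alpha_k\in\Lambda^k\g^{*}\otimes\Omega^{n-k}(M)$ (each of total degree $n$), recalling $\widetilde\omega=\sum_{k=1}^{n+1}(-1)^{k-1}\omega_k$ with $\omega_k\in\Lambda^k\g^{*}\otimes\Omega^{n+1-k}(M)$, and setting $\widetilde\alpha_0:=0$, $\widetilde\alpha_{n+1}:=0$, the $\bigl(\Lambda^k\g^{*}\otimes\Omega^{n+1-k}(M)\bigr)$-component of $\widetilde d_{tot}\widetilde\alpha=\widetilde\omega$ reads
\begin{equation*}
(d_\g\otimes 1)\,\widetilde\alpha_{k-1}+(1\otimes d)\,\widetilde\alpha_k=(-1)^{k-1}\,\omega_k ,\qquad k=1,\dots,n+1 ,
\end{equation*}
and $\widetilde d_{tot}\widetilde\alpha=\widetilde\omega$ is equivalent to this whole system.

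Next I would evaluate the $k$-th equation on an arbitrary $p\in\Lambda^k\g$. Here $d_\g$ acts only on the Lie-algebra factor and is the transpose of the homology differential $\delta$, so $\bigl((d_\g\otimes 1)\widetilde\alpha_{k-1}\bigr)(p)=\widetilde\alpha_{k-1}(\delta p)$; with the Koszul sign convention that makes $\widetilde d_{tot}$ a differential, $\bigl((1\otimes d)\widetilde\alpha_k\bigr)(p)=(-1)^{k}\,d\bigl(\widetilde\alpha_k(p)\bigr)$; and $\omega_k(p)=\iota_{v_p}\omega$ by definition. Thus the $k$-th equation becomes $\widetilde\alpha_{k-1}(\delta p)+(-1)^k\,d\widetilde\alpha_k(p)=(-1)^{k-1}\iota_{v_p}\omega$. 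Substituting $\widetilde\alpha_k=\zeta(k)\,\widetilde f_k$ (legitimate since $\zeta(k)^2=1$), multiplying through by $(-1)^k\zeta(k)$, and using the elementary identity $\zeta(k)=(-1)^k\zeta(k-1)$, one rewrites this as
\begin{equation*}
-\widetilde f_{k-1}(\delta p)=d\widetilde f_k(p)+\zeta(k)\,\iota_{v_p}\omega ,
\end{equation*}
which is precisely \eqref{homotopymomap}. The boundary conventions $\widetilde\alpha_0=\widetilde\alpha_{n+1}=0$ correspond to $\widetilde f_0=\widetilde f_{n+1}\equiv 0$; and for $k=1$ (where $\zeta(1)=1$) one recovers $d\widetilde f_1(x)=-\iota_{v_x}\omega$, so in particular $\widetilde f_1$ takes values in $\Omega^{n-1}_{\mathrm{Ham}}(M)$, as required. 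Since every step above is an equivalence, running the chain in reverse shows conversely that the components of a homotopy moment map, rescaled by the $\zeta(k)$, assemble into a primitive $\widetilde\alpha$ of $\widetilde\omega$ in $\widetilde C$.

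The only delicate point is the sign bookkeeping: the Koszul sign implicit in $\widetilde d_{tot}$, the alternating sign $(-1)^{k-1}$ in the definition of $\widetilde\omega$, and the identity $\zeta(k)=(-1)^k\zeta(k-1)$ must all conspire so that the bidegree-$(k,n+1-k)$ piece of $\widetilde d_{tot}\widetilde\alpha=\widetilde\omega$ matches the $k$-th equation of \eqref{homotopymomap} on the nose, including the extreme cases $k=1$ and $k=n+1$. No further input is needed; in particular the Extended Cartan Formula (Lemma \ref{cartan}) plays no role here — it is relevant only for checking that $\widetilde\omega$ is $\widetilde d_{tot}$-closed, which is not part of this statement.
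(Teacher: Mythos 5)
The paper states this proposition without proof, simply citing \cite{FLRZ} and \cite{LeonidTillmannComoment}, so there is no in-paper argument to compare against. Your proof is correct and is the expected one: the bidegree decomposition, the identities $\zeta(k)^2=1$ and $\zeta(k)=(-1)^k\zeta(k-1)$, and the Koszul sign $\bigl((1\otimes d)\widetilde\alpha_k\bigr)(p)=(-1)^k d\bigl(\widetilde\alpha_k(p)\bigr)$ all check out, and that sign convention is the one the paper implicitly uses later (e.g.\ the equation $-dh_1=0$ in Example \ref{so(3)}). The boundary cases $k=1$ and $k=n+1$ are handled correctly, so nothing is missing.
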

 In other words, a homotopy moment map for the action of $\g$ on $(M,\omega)$ exists if and only if the class $[\widetilde{\omega}]=0$ in $H^{n+1}(\widetilde C)$. \\
 
 By the Kuenneth theorem  $H^{n+1}(\widetilde C)=\bigoplus_{i=1}^{n+1} H^i(\mathfrak g)\otimes H^{n+1-i}(M)$, where $H^i(\mathfrak g)$ are the Lie algebra cohomology groups and $H^{n+1-i}(M)$ the de Rham cohomology groups. In particular, the class $[\widetilde \omega]$ can be decomposed into individual classes in $H^i(\mathfrak g)\otimes H^{n+1-i}(M)$. The case of $i=n+1$ will be of particular interest to us and can be described in the following non-cohomological manner:

\begin{lemma} \label{condition}
	If there exists a homotopy moment map for $\g$ acting on $(M,\omega)$, then the map
	\begin{align*}
	\phi:P_{n+1,\g} &\to C^{\infty}(M)\\
	p&\mapsto \iota_{v_p}\omega
	\end{align*}
	vanishes identically.
\end{lemma}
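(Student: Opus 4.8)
The plan is to use the characterization in Proposition~\ref{prop:hotexists}: the existence of a homotopy moment map means $\widetilde{\alpha} = \sum_k \widetilde{\alpha}_k$ with $\widetilde{d}_{tot}\widetilde{\alpha} = \widetilde{\omega}$, where the top piece $\widetilde{\alpha}_{n+1}$ lives in $\Lambda^{n+1}\g^* \otimes \Omega^{-1}(M) = 0$, so in fact $\widetilde{\alpha}$ only has components $\widetilde{\alpha}_1,\ldots,\widetilde{\alpha}_n$. I would write out the degree $(n+1,0)$ part of the equation $\widetilde{d}_{tot}\widetilde{\alpha} = \widetilde{\omega}$, i.e., the component landing in $\Lambda^{n+1}\g^* \otimes \Omega^0(M) = \Lambda^{n+1}\g^* \otimes C^\infty(M)$. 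On the left, $\widetilde{d}_{tot} = d_\g \otimes 1 + 1 \otimes d$; the term $1 \otimes d$ applied to $\widetilde{\alpha}_{n+1}$ vanishes (that component is zero), and $d_\g \otimes 1$ applied to $\widetilde{\alpha}_n \in \Lambda^n\g^* \otimes \Omega^0(M)$ lands in $\Lambda^{n+1}\g^* \otimes \Omega^0(M)$. On the right, the only component of $\widetilde{\omega}$ in this bidegree is $(-1)^n \omega_{n+1}$, where $\omega_{n+1}(p) = \iota_{v_p}\omega \in \Omega^0(M) = C^\infty(M)$.

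So the $(n+1,0)$-component of the equation reads $(d_\g \widetilde{\alpha}_n)(p) = (-1)^n \iota_{v_p}\omega$ for all $p \in \Lambda^{n+1}\g$, where $d_\g = d_{\g,\RR}$ is the Chevalley--Eilenberg differential, which by the formula in Section 1.2 (with trivial module $\RR$) is the dual of $\delta$: $(d_\g \beta)(x_1 \wedge \cdots \wedge x_{n+1}) = \beta(\delta(x_1 \wedge \cdots \wedge x_{n+1}))$ for $\beta \in \Lambda^n\g^* \otimes C^\infty(M)$. Now restrict to $p \in P_{n+1,\g} = \ker\delta_{n+1}$: then $\delta p = 0$, so the left-hand side $(\widetilde{\alpha}_n \circ \delta)(p)$ vanishes, hence $\iota_{v_p}\omega = (-1)^n \cdot 0 = 0$. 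This shows $\phi(p) = \iota_{v_p}\omega = 0$ for every $p \in P_{n+1,\g}$, which is the claim.

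I do not expect a serious obstacle here; the only care needed is bookkeeping of bidegrees and signs, and making sure one has correctly identified that the potential $\widetilde{\alpha}$ has no $\widetilde{\alpha}_{n+1}$ component because $\Omega^{-1}(M) = 0$, so the equation in top bidegree becomes purely $d_\g \widetilde{\alpha}_n = \pm\omega_{n+1}$ with no contribution from the de Rham differential. An alternative, essentially equivalent route is cohomological: the discussion after Proposition~\ref{prop:hotexists} notes that $[\widetilde{\omega}] = 0$ in $H^{n+1}(\widetilde{C}) = \bigoplus_i H^i(\g) \otimes H^{n+1-i}(M)$ forces the $i = n+1$ component $[\omega_{n+1}] \in H^{n+1}(\g) \otimes H^0(M) = H^{n+1}(\g)$ to vanish, and since $\omega_{n+1}$ as a cochain is $p \mapsto \iota_{v_p}\omega$, its restriction to cycles $P_{n+1,\g} = Z_{n+1}(\g)$ computes the pairing of the class $[\omega_{n+1}]$ with homology classes; vanishing of the class is equivalent (over $\RR$, by the universal coefficient / duality pairing between $H^{n+1}(\g)$ and $H_{n+1}(\g)$, noting $\Lambda^{n+1}\g$ with $\delta$ has zero differential out of top degree so every element of $P_{n+1,\g}$ is a cycle) to $\phi$ vanishing on $P_{n+1,\g}$. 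I would present the direct computation as the main proof since it is shorter and self-contained, and perhaps remark on the cohomological interpretation.
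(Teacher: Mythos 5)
Your proof is correct and is essentially the paper's argument in different clothing: the top-bidegree component of $\widetilde d_{tot}\widetilde\alpha=\widetilde\omega$ that you extract is precisely the $k=n+1$ instance of equation \eqref{homotopymomap} (via $\widetilde f_k=\zeta(k)\widetilde\alpha_k$ and $\widetilde f_{n+1}\equiv 0$), and the paper likewise concludes by restricting that equation to $P_{n+1,\g}=\ker\delta_{n+1}$. The paper just invokes \eqref{homotopymomap} directly rather than passing through the double-complex formulation.
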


\begin{proof}
	If there exists a homotopy moment map $f$ for the action of $\g$ on $(M,\omega)$, then it has to satisfy equation \eqref{homotopymomap} for $k=n+1,$ i.e.,
	\[-\widetilde f_{n}(\delta p)=\zeta(n+1)\iota_{v_p}\omega\]
	for all $p \in \Lambda ^{n+1}\g.$ This means that for $p \in P_{n+1,\g}$, we have $\iota_{v_p}\omega=0$.
\end{proof}

\begin{remark} \label{classc}
	In \cite[Cor 9.3]{FRZ}, the following map was introduced:
	
	\begin{equation} \label{c}
	c_m^{\g}: \Lambda^{n+1}\g \to \RR,~~
	p\mapsto (-1)^n\zeta(n+1)\iota_{v_p}\omega|_{m},
	\end{equation}
	for some $m\in M$. It was shown in \cite{FRZ} that this map is a $(n+1)$-cocycle in $\Lambda \g^*$, and, for connected $M$, the cohomology class $[c_m^{\g}]\in H^{n+1}(\g)$ does not depend on the point $m\in M$. Moreover, \cite[Prop. 9.5]{FRZ} states that if a connected $n$-plectic manifold $(M,\omega)$ is equipped with a $G$-action which induces a homotopy moment map, then
	\begin{equation*}
	[c_m^{\g}]=0.
	\end{equation*}
	Up to sign, the class $[c^\g_m]$ can be interpreted as the evaluation at $m$ of the $H^{n+1}(\mathfrak g)\otimes H^0(M)$ component of $[\widetilde{\omega}]$.	
	
\end{remark}

\begin{lemma} \label{phiandc} Let $(M,\omega)$ be a connected $n$-plectic manifold equipped with a $G$-action preserving $\omega$. Let $c_m^{\g}$ be defined as in equation \eqref{c}, and $\phi$ be defined as in Lemma \ref{condition}.
Then the condition \[[c_m^{\g}]=0\] is equivalent to $\phi\equiv 0$.
\end{lemma}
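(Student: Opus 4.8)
The plan is to factor the claimed equivalence through the intermediate condition that $c_m^{\g}$ vanishes on the Lie kernel $P_{n+1,\g}$, i.e.\ to prove
\[
[c_m^{\g}]=0 \iff c_m^{\g}\big|_{P_{n+1,\g}}\equiv 0 \iff \phi\equiv 0 .
\]
The first equivalence is pure finite-dimensional linear algebra. Since $d_{\g}=d_{\g,\RR}$ is the transpose of the Lie algebra homology differential $\delta$, the coboundary map $d_{\g}^{n}\colon\Lambda^{n}\g^{*}\to\Lambda^{n+1}\g^{*}$ is the transpose of $\delta_{n+1}\colon\Lambda^{n+1}\g\to\Lambda^{n}\g$, so its image is the annihilator of $\ker\delta_{n+1}=P_{n+1,\g}$ inside $\Lambda^{n+1}\g^{*}$ (here $\g$ is finite dimensional). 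As $c_m^{\g}$ is an $(n+1)$-cocycle by Remark \ref{classc}, $[c_m^{\g}]=0$ in $H^{n+1}(\g)$ is equivalent to $c_m^{\g}\in\ima(d_{\g}^{n})$, i.e.\ to $c_m^{\g}(p)=0$ for every $p\in P_{n+1,\g}$.

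For the second equivalence I would first note that, directly from the definitions, $c_m^{\g}(p)=(-1)^{n}\zeta(n+1)\,\iota_{v_p}\omega|_{m}=(-1)^{n}\zeta(n+1)\,\phi(p)(m)$ for $p\in P_{n+1,\g}$, and $\zeta(n+1)=-(-1)^{(n+1)(n+2)/2}\neq 0$; hence $c_m^{\g}$ vanishes on $P_{n+1,\g}$ exactly when every function $\phi(p)$ vanishes at the single point $m$. It therefore suffices to show that each $\phi(p)$, $p\in P_{n+1,\g}$, is a constant function: then vanishing at $m$ forces vanishing on all of $M$, hence $\phi\equiv 0$, while the reverse implication $\phi\equiv 0\Rightarrow c_m^{\g}|_{P_{n+1,\g}}\equiv 0$ is immediate. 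Constancy I would obtain from the Extended Cartan Formula (Lemma \ref{cartan}) applied with $\alpha=\omega$ and $k=n+1$: for a decomposable $q=x_1\wedge\cdots\wedge x_{n+1}\in\Lambda^{n+1}\g$ the terms $\iota(\cdots)\,d\omega$ and $\iota(\cdots)\pounds_{v_{x_i}}\omega$ vanish because $\omega$ is closed and the action preserves $\omega$, leaving $(-1)^{n+1}d(\iota_{v_q}\omega)=\iota(\bar\delta_{n+1}v_q)\omega=\iota_{v_{\delta_{n+1}q}}\omega$ by Remark \ref{morphism}; summing over a presentation of $p$ as a sum of decomposables (the Lie-derivative and $d\omega$ contributions vanish summand by summand) gives $(-1)^{n+1}d(\iota_{v_p}\omega)=\iota_{v_{\delta_{n+1}p}}\omega=0$ since $\delta_{n+1}p=0$. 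Thus $d(\phi(p))=0$, and connectedness of $M$ makes $\phi(p)$ constant.

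The genuinely delicate point is this last step, and within it the passage from the Extended Cartan Formula as stated — for decomposable multivector fields — to an arbitrary element of $P_{n+1,\g}$; this causes no trouble precisely because the hypotheses $d\omega=0$ and $\pounds_{v_x}\omega=0$ annihilate termwise exactly the contributions that are not linear in the multivector, but it is the place where the invariance and closedness assumptions are actually consumed. As an alternative that sidesteps the constancy argument entirely, one could instead invoke the point-independence of the class $[c_{m'}^{\g}]\in H^{n+1}(\g)$ recalled in Remark \ref{classc}: if $[c_m^{\g}]=0$ then $[c_{m'}^{\g}]=0$ for every $m'\in M$, whence by the first equivalence $c_{m'}^{\g}(p)=0$ for all $m'$ and all $p\in P_{n+1,\g}$, which says exactly that $\phi\equiv 0$.
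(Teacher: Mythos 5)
Your proposal is correct and follows essentially the same route as the paper: both directions are reduced to the vanishing of $c_m^{\g}$ on $P_{n+1,\g}$ via the linear-algebra identity $\ima(d^{n}_{\g})=(\ker\delta_{n+1})^{\circ}$. The only (cosmetic) difference is in passing from vanishing at the single point $m$ to vanishing on all of $M$: you primarily argue via constancy of $\iota_{v_p}\omega$ (which is the paper's own Lemma~\ref{closed} for $k=n+1$, plus connectedness), whereas the paper invokes the point-independence of $[c_m^{\g}]$ recalled in Remark~\ref{classc} --- which is precisely the alternative you offer at the end.
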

\begin{proof}
Indeed, assume $\phi\equiv 0$. Then, for $p\in P_{n+1,\g}$
\begin{align*}
c^{\g}_m(p)=(-1)^n\zeta(n+1)(\iota_{v_p}\omega)|_{m}=0
\end{align*}
for any $p\in M$.
That means that $c^{\g}_m\in (P_{n+1,\g})^\circ$, where $(P_{n+1,\g})^\circ$ is the annihilator of $P_{n+1,\g}$. Therefore $c^{\g}_m\in (P_{n+1,\g})^\circ=(ker \hspace{0.05 cm}\delta_{n+1})^\circ=im \hspace{0.05 cm} d^{n}_{\g},$
i.e., $[c_m^{\g}]=0$.\\

Conversely, suppose $[c_m^{\g}]=0$. Since $M$ is connected, this holds at all $m\in M$. Then $c_m^{\g}=d_{\g}\xi$
for some $\xi\in \Lambda^n\g^*$. For any $p\in P_{n+1,\g}$ and $m\in M$ this means
\begin{align*}
\iota_{v_p}\omega|_{m}=	(-1)^n\zeta(n+1)c_m^{\g}(p)= (-1)^n\zeta(n+1)(d_\g\xi)(p)=(-1)^n\zeta(n+1)\xi(\delta p) =0.
\end{align*}
\end{proof}
 
\subsection{Existence of weak moment maps} 
  We will need the following lemma.
  
  \begin{lemma}\label{closed} 
  	For $p\in P_{k,\g}, k=1,...,n+1,$ the form $\iota_{v_p}\omega$ is closed, i.e., $$d(\iota_{v_p}\omega)=0.$$
  	In particular, $p\mapsto \iota_{v_p}\omega$ induce well-defined maps  $ P_{k,\g}\to  H^{n+1-k}(M)$.
  \end{lemma}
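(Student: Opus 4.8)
The natural tool is the Extended Cartan Formula (Lemma~\ref{cartan}) applied with $\alpha=\omega$, exploiting that $\omega$ is closed and that the $\g$-action preserves $\omega$, so $\pounds_{v_x}\omega=0$ for all $x\in\g$. Since Lemma~\ref{cartan} is stated only for $k\ge 2$, I would first dispose of the case $k=1$ by hand: here $\delta_1\equiv 0$, so $P_{1,\g}=\g$, and the ordinary Cartan formula gives $d\,\iota_{v_x}\omega=\pounds_{v_x}\omega-\iota_{v_x}d\omega$, both terms of which vanish. This settles $k=1$.

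For $2\le k\le n+1$, I would first take a decomposable element $p=x_1\wedge\cdots\wedge x_k$, so that $v_p=v_{x_1}\wedge\cdots\wedge v_{x_k}$, and substitute $v_i=v_{x_i}$ and $\alpha=\omega$ into Lemma~\ref{cartan}. The term $\iota(v_1\wedge\cdots\wedge v_k)d\omega$ vanishes because $d\omega=0$; the entire sum $\sum_i(-1)^i\iota(v_1\wedge\cdots\widehat{v_i}\cdots\wedge v_k)\pounds_{v_i}\omega$ vanishes because $\pounds_{v_{x_i}}\omega=0$ for every $i$. What remains is
\[
(-1)^k\,d\,\iota_{v_p}\omega=\iota\bigl(\bar\delta_k(v_{x_1}\wedge\cdots\wedge v_{x_k})\bigr)\omega=\iota_{v_{\delta_k p}}\omega,
\]
where the last equality is Remark~\ref{morphism}. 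Both sides are linear in $p$, so this identity extends from decomposables to all of $\Lambda^k\g$. If now $p\in P_{k,\g}=\ker\delta_k$, then $v_{\delta_k p}=0$ and hence $d\,\iota_{v_p}\omega=0$, which is the claim (for $k=n+1$ this says the function $\iota_{v_p}\omega$ is locally constant, consistent with Lemma~\ref{condition}).

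For the ``in particular'' clause, closedness of $\iota_{v_p}\omega\in\Omega^{n+1-k}(M)$ means it defines a de Rham class, and since $p\mapsto\iota_{v_p}\omega$ is linear with image in closed forms, post-composing with the quotient map $\Omega^{n+1-k}_{\mathrm{cl}}(M)\to H^{n+1-k}(M)$ yields a well-defined linear map $P_{k,\g}\to H^{n+1-k}(M)$. I do not anticipate any genuine obstacle: the only points needing care are the separate treatment of $k=1$ (which lies outside the hypothesis of Lemma~\ref{cartan}) and the reduction from general multivectors to decomposable ones by linearity, together with the explicit use of the standing assumption that the $\g$-action preserves $\omega$.
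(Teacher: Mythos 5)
Your proof is correct and is exactly the argument the paper intends: its own proof is the one-line citation of Remark~\ref{morphism} and Lemma~\ref{cartan}, and you have simply filled in the details (including the sensible separate treatment of $k=1$, which the paper glosses over). No issues.
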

  \begin{proof}
  	This follows from Remark \ref{morphism} and Lemma \ref{cartan}.
  \end{proof}
  
  \begin{lemma}\label{vanish}
   A weak moment map exists if and only if the maps
  	\begin{align*}
  	P_{k,\g} &\to H^{n+1-k}(M)\\
  	p&\mapsto [\iota_{v_p}\omega]
  	\end{align*}
  	are identically zero for $1\leq k \leq n$.
  \end{lemma}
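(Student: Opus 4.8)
The plan is to prove the two implications separately, reading the cohomology classes $[\iota_{v_p}\omega]\in H^{n+1-k}(M)$ through Lemma \ref{closed}, which guarantees that $\iota_{v_p}\omega$ is closed for $p\in P_{k,\g}$ so that the classes in the statement make sense.

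For the ``only if'' direction, suppose $(\widehat f_k)_{1\le k\le n}$ is a weak moment map. Fix $k$ with $1\le k\le n$ and $p\in P_{k,\g}$. By definition $d\widehat f_k(p)=-\zeta(k)\iota_{v_p}\omega$, so $\iota_{v_p}\omega=-\zeta(k)^{-1}d\widehat f_k(p)$ is a scalar multiple of an exact form, hence exact; therefore $[\iota_{v_p}\omega]=0$ in $H^{n+1-k}(M)$. Since $p$ was arbitrary, the map $P_{k,\g}\to H^{n+1-k}(M)$ vanishes for every such $k$.

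For the ``if'' direction, assume the maps $P_{k,\g}\to H^{n+1-k}(M)$, $p\mapsto[\iota_{v_p}\omega]$, are identically zero for $1\le k\le n$; equivalently, $\iota_{v_p}\omega$ is exact for every $p\in P_{k,\g}$. We must produce the maps $\widehat f_k$, and the only delicate point is that a primitive of $\iota_{v_p}\omega$ is not canonical, so we must choose primitives linearly in $p$. This is possible because $\g$, hence every Lie kernel $P_{k,\g}\subseteq\Lambda^k\g$, is finite-dimensional: for each $k$ pick a basis $p_1,\dots,p_m$ of $P_{k,\g}$, choose $\beta_i\in\Omega^{n-k}(M)$ with $d\beta_i=\iota_{v_{p_i}}\omega$ (possible by exactness), and let $\widehat f_k\colon P_{k,\g}\to\Omega^{n-k}(M)$ be the unique linear map with $\widehat f_k(p_i)=-\zeta(k)\beta_i$. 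Then $d\widehat f_k(p)=-\zeta(k)\iota_{v_p}\omega$ holds on basis vectors and hence, by linearity, for all $p\in P_{k,\g}$, so $(\widehat f_k)_{1\le k\le n}$ is a weak moment map.

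There is no substantial obstacle here: both directions amount to unwinding the defining equation $d\widehat f_k(p)=-\zeta(k)\iota_{v_p}\omega$, and the mild point in the ``if'' direction is that linearity of $\widehat f_k$ should be secured by fixing a basis of $P_{k,\g}$ rather than by choosing primitives pointwise. (Equivalently, one uses that over the finite-dimensional space $P_{k,\g}$ the operation of selecting a primitive can be made linear.)
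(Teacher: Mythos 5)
Your proof is correct and follows essentially the same route as the paper, whose own argument is a one-line observation that the defining equation $d\widehat f_k(p)=-\zeta(k)\iota_{v_p}\omega$ is exactly the statement that $\iota_{v_p}\omega$ is exact for all $p\in P_{k,\g}$. Your additional care in the ``if'' direction --- choosing primitives on a basis of the finite-dimensional space $P_{k,\g}$ to guarantee linearity of $\widehat f_k$ --- is a legitimate point that the paper leaves implicit, but it does not change the substance of the argument.
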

  \begin{proof}
  	By definition of weak moment maps, for all $p\in P_{k,\g}$ and all $k$, the form $\iota_{v_{p}}\omega$ has to be exact.
  \end{proof}
 
Analogously to Proposition \ref{characterizationhomotopymomap}, we can encode weak moment maps as primitives of a certain element in the double complex
\begin{equation} 
(P_{\geq 1, \mathfrak g}^{*}\otimes\Omega(M),0,d)
\end{equation}
with zero differential on $P_{\geq 1,\mathfrak g}^{*}$ and de Rham differential on $M$. Let $\widehat C$ be the total complex with differential $ \widehat{d}_{tot}:=1\otimes d$.
We define $\widehat{\omega}\in \widehat C$ as
\[\widehat{\omega}:=\sum_{k=1}^{n}(-1)^{k-1}(\omega_k|_{P_{k,\mathfrak g}}). \]
From the discussion above, it follows that:

\begin{prop}\label{prop:weakexist}
	Let $\widehat\alpha:=\widehat\alpha_1+...+\widehat\alpha_n$, with $\widehat\alpha_k \in P_{k,\g}^{*}\otimes \Omega^{n-k}(M).$ Then $\widehat d_{tot}\widehat\alpha=\widehat{\omega}$ if and only if
	\[\widehat f_k:=\zeta(k)\widehat\alpha_k: P_{k,\g}\to\Omega^{n-k}(M),~~~~k=1,...,n\]
	are the components of a weak moment map for the action of $G$ on $(M,\omega).$	I.e., the existence of a weak moment map is equivalent to  the vanishing of $[\widehat \omega]\in H^{n+1}(\widehat C)$.
\end{prop}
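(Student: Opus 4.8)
The plan is to run the definition chase underlying Proposition~\ref{characterizationhomotopymomap}, now inside the smaller complex $\widehat C$. Observe that $\widehat C$ differs from $\widetilde C$ in exactly three ways: the first tensor factor is restricted from $\Lambda^{\geq 1}\g^*$ to $P_{\geq 1,\g}^*$, the Chevalley--Eilenberg differential is dropped, and the $k=n+1$ summand is discarded. The second of these is forced rather than chosen: since $\delta$ sends $P_{k,\g}=\ker\delta_k$ into $0\subseteq P_{k-1,\g}$, its transpose on $P^*_{\geq1,\g}$ is the zero map, so $\widehat{d}_{tot}=1\otimes d$ is the only differential available; the restriction of the first factor amounts, at the level of elements, to evaluating everything only on elements of the Lie kernels.

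First I would split the equation $\widehat{d}_{tot}\widehat\alpha=\widehat\omega$ into its $P_{k,\g}^*$-homogeneous parts. Since $1\otimes d$ preserves the $P_{k,\g}^*$-degree, the equation decouples, with no cross terms, into the $n$ independent equations $(1\otimes d)\widehat\alpha_k=(-1)^{k-1}\bigl(\omega_k|_{P_{k,\g}}\bigr)$ in $P_{k,\g}^*\otimes\Omega^{n+1-k}(M)$ for $k=1,\dots,n$. Evaluating at $p\in P_{k,\g}$ and keeping track of the Koszul signs exactly as in the proof of Proposition~\ref{characterizationhomotopymomap} turns each of these into $d\bigl(\widehat\alpha_k(p)\bigr)=-\iota_{v_p}\omega$; this is the same manipulation as in the homotopy case, except that the term $\widehat f_{k-1}(\delta p)$ is absent because $\delta p=0$ on the Lie kernel and $k$ now ranges only up to $n$. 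Substituting $\widehat f_k:=\zeta(k)\widehat\alpha_k$ and using $\zeta(k)^2=1$, this becomes $d\bigl(\widehat f_k(p)\bigr)=-\zeta(k)\iota_{v_p}\omega$ for all $p\in P_{k,\g}$, $1\le k\le n$, i.e.\ precisely the defining system of a weak moment map; reading the equivalences backwards gives the converse. For the concluding sentence, note that $\widehat\omega$ is $\widehat{d}_{tot}$-closed by Lemma~\ref{closed}, so $[\widehat\omega]\in H^{n+1}(\widehat C)$ is defined, and $\widehat{d}_{tot}\widehat\alpha=\widehat\omega$ admits a solution $\widehat\alpha$ if and only if this class vanishes.

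I do not expect a genuine obstacle: once the complex $\widehat C$ and the element $\widehat\omega$ are set up, everything is forced, and the only point requiring care is the sign bookkeeping when passing from $(1\otimes d)\widehat\alpha_k=(-1)^{k-1}\omega_k|_{P_{k,\g}}$ to $d\widehat f_k(p)=-\zeta(k)\iota_{v_p}\omega$ --- which is nothing but the $\delta p=0$, $k\le n$ specialization of the computation already carried out for Proposition~\ref{characterizationhomotopymomap}.
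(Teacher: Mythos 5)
Your proof is correct and follows the same route the paper intends: the paper states this proposition with essentially no proof (``From the discussion above, it follows that\dots''), relying on the decoupling of $\widehat d_{tot}=1\otimes d$ into $P_{k,\g}^*$-homogeneous components and on Lemma~\ref{closed} for well-definedness of $[\widehat\omega]$, exactly as you do. Your sign bookkeeping ($(1\otimes d)\widehat\alpha_k=(-1)^{k-1}\omega_k|_{P_{k,\g}}$ becoming $d\widehat f_k(p)=-\zeta(k)\iota_{v_p}\omega$ via the Koszul convention and $\zeta(k)=\pm1$) is consistent with the definitions, so the write-up simply makes explicit what the paper leaves implicit.
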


\subsection{Main result of the section}

\begin{thm} \label{existence}
Let $\g$ act on $(M,\omega)$ by preserving $\omega$. The following statements are equivalent:
\begin{enumerate}
	\item The action admits a homotopy moment map
	\item The action admits a weak moment map and $\phi\in  P^*_{n+1,\g} \otimes C^{\infty}(M) $ as defined in Lemma \ref{condition} vanishes identically.
\end{enumerate}

\end{thm}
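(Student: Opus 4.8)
The plan is to prove the two implications separately, using the double-complex reformulations in Propositions~\ref{prop:hotexists} and \ref{prop:weakexist} together with the Künneth decomposition of $H^{n+1}(\widetilde C)$.

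\emph{From (1) to (2).} Suppose the action admits a homotopy moment map. Restricting its components $\widetilde f_k$ to the Lie kernels $P_{k,\g}$ immediately produces a weak moment map: indeed, equation~\eqref{homotopymomap} for $p\in P_{k,\g}$ reads $0=-\widetilde f_{k-1}(\delta p)=d\widetilde f_k(p)+\zeta(k)\iota_{v_p}\omega$, which is exactly the weak moment map condition. That $\phi\equiv 0$ is precisely the content of Lemma~\ref{condition}. So this direction requires no new work beyond assembling these observations.

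\emph{From (2) to (1).} This is the substantive direction. By Proposition~\ref{prop:hotexists}, a homotopy moment map exists if and only if $[\widetilde\omega]=0$ in $H^{n+1}(\widetilde C)$, and by Künneth $H^{n+1}(\widetilde C)=\bigoplus_{i=1}^{n+1}H^i(\g)\otimes H^{n+1-i}(M)$. I would argue that the hypotheses in (2) kill every Künneth component of $[\widetilde\omega]$. For the components with $1\le i\le n$: by Lemma~\ref{vanish}, the existence of a weak moment map means that the maps $P_{k,\g}\to H^{n+1-k}(M)$, $p\mapsto[\iota_{v_p}\omega]$, vanish for $1\le k\le n$; since $H^i(\g)=(\ker\delta_i)^*/\cdots$ pairs against the Lie kernel $P_{i,\g}$, the vanishing of these maps is exactly the vanishing of the $H^i(\g)\otimes H^{n+1-i}(M)$ components for $i\le n$. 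The remaining, top component lives in $H^{n+1}(\g)\otimes H^0(M)$; by Remark~\ref{classc} (and Lemma~\ref{phiandc}, noting $M$ may be assumed connected componentwise) this component is represented by the class $[c_m^\g]$, and $\phi\equiv 0$ forces $[c_m^\g]=0$ by Lemma~\ref{phiandc}. Hence all Künneth components vanish, so $[\widetilde\omega]=0$ and a homotopy moment map exists.

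\emph{Main obstacle.} The delicate point is the bookkeeping identifying the weak-moment-map obstruction $[\widehat\omega]\in H^{n+1}(\widehat C)$ with the sum of the lower Künneth components of $[\widetilde\omega]$, i.e., showing that "$\iota_{v_p}\omega$ exact for all $p$ in all Lie kernels" is genuinely equivalent to the vanishing of $\bigoplus_{i=1}^n H^i(\g)\otimes H^{n+1-i}(M)$-part, and not merely implied by it. This requires care with the duality $H^i(\g)\cong\operatorname{Hom}(P_{i,\g},\RR)$ (valid since $\delta$ has the Lie kernels as its cycles in the relevant degrees) and with the fact that restricting a total-complex cocycle to $P_{\ge 1,\g}^*\otimes\Omega(M)$ is a chain map inducing the projection onto those Künneth summands. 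Once that identification is in place, the connected-components subtlety for $H^0(M)$ is routine: apply Lemma~\ref{phiandc} on each component, or observe $\phi$ is locally constant and $\phi\equiv0$ globally iff it vanishes on every component. I would also make explicit that if $[\widetilde\omega]=0$ then one can choose a total primitive $\widetilde\alpha$ whose $\delta$-part vanishes on the Lie kernels, recovering the weak moment map, to confirm internal consistency.
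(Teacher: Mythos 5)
Your overall strategy coincides with the paper's: decompose $[\widetilde\omega]$ via K\"unneth, show that the components in $H^i(\g)\otimes H^{n+1-i}(M)$ for $i\le n$ are killed by the existence of a weak moment map, and that the top component is killed by $\phi\equiv 0$. The direction $(1)\Rightarrow(2)$ is fine. However, the step you yourself flag as the ``main obstacle'' is where the real content lies, and your proposed justification for it is incorrect: you invoke a duality $H^i(\g)\cong\operatorname{Hom}(P_{i,\g},\RR)$, which fails in general. Dualizing the exact sequence $0\to P_{i,\g}\to\Lambda^i\g\xrightarrow{\delta_i}\Lambda^{i-1}\g$ gives $P_{i,\g}^*\cong\Lambda^i\g^*/\operatorname{im}(d_\g^{i-1})$, whereas $H^i(\g)=\ker(d_\g^i)/\operatorname{im}(d_\g^{i-1})$; these coincide only when $d_\g^i=0$. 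Already for $i=1$ and $\g$ non-abelian one has $H^1(\g)=(\g/[\g,\g])^*\subsetneq\g^*=P_{1,\g}^*$. What is true, and what the argument actually needs, is only the \emph{injectivity} of the restriction map $H^i(\g)\to P_{i,\g}^*$, $[\xi]\mapsto\xi|_{P_{i,\g}}$: if $\xi\in\ker(d_\g^i)$ vanishes on $P_{i,\g}=\ker\delta_i$, then $\xi\in(\ker\delta_i)^\circ=\operatorname{im}(\delta_i^*)=\operatorname{im}(d_\g^{i-1})$, so $[\xi]=0$. This is exactly how the paper proceeds: the restriction $(res\otimes id):\widetilde C\to\widehat C$ is a chain map sending $\widetilde\omega$ to $\widehat\omega+\phi$, and by naturality of K\"unneth its effect on $H^{n+1}$ is $[res]\otimes[id]$ with $[res]:H^i(\g)\hookrightarrow P_{i,\g}^*$ injective, so $[\widehat\omega]=0$ together with $\phi\equiv 0$ forces $[\widetilde\omega]=0$. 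With the injectivity statement corrected and proven as above your argument goes through; note also that you do not need $[c_m^\g]$ or connectedness of $M$ for the top component, since $\phi\equiv 0$ handles it directly under the same injective restriction.
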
 

\begin{proof}
	
	The implication $(1)\to (2)$ is an immediate consequence of the fact that any homotopy moment map restricts to a weak moment map and Lemma \ref{condition}. To prove the converse, we first observe that $\widetilde \omega |_{P_{\geq 1,\mathfrak g}}= \widehat \omega + \phi$. In other words, the restriction 
	\begin{equation} \label{chainmap}
	(res\otimes id):\widetilde{C}=\Lambda^{\geq 1}\mathfrak g^*\otimes \Omega(M)\to{P^*_{\geq 1,\mathfrak g}}\otimes \Omega(M)=\widehat{C}
	\end{equation}
	
	 maps $\widetilde \omega$ to $\widehat{\omega} +\phi$. The restriction is a chain map, so to complete the proof we just have to verify that its induced map in cohomology $[res\otimes id]:H(\widetilde{ C})\to H(\widehat{ C})$ is injective. By using the naturality of the Kuenneth isomorphism we get the following commutative diagram:
	
	\begin{center}
		\begin{tikzpicture} 
		\node (a)  at (0,3) {$H^{n+1}(\Lambda^{\geq 1} \g^{*}\otimes \Omega(M))$};
		\node (b)  at (6,3) 
		{$H^{n+1}( P^{*}_{\g}\otimes \Omega(M))$};
		\node (c)  at (0,0) {$\bigoplus_{k\geq 1} H^k\g \otimes H^{n+1-k}(M)$};
		\node (d) at (6,0) {$\bigoplus_{k\geq 1} P^{*}_{k,\g}\otimes H^{n+1-k}(M)$};

		\path[->] (a) edge node[below] {$[res\otimes id]$} (b);
		\path[->] (b) edge node[right] {$\cong$} (d);
		\path[->] (c) edge node[above] {$[res] \otimes [id]$} (d);
		\path[->] (a) edge node[left] {$ \cong $} (c);
		
		\end{tikzpicture}
	\end{center}
	
	Thus, to prove our claim it suffices to verify that $[res]:H^k(\mathfrak g)\to P_{k,\g}^*$ is injective. To see this, consider the exact sequence \[0\to P_{k,\g} \xrightarrow{i} \Lambda^k\g \xrightarrow{\delta_k} \Lambda ^{k-1}\g.\] 
	Dualizing it leads to the exact sequence \[0 \leftarrow P_{k,\g}^{*} \xleftarrow{\pi} \Lambda^k{\g^{*}} \xleftarrow{d^{k-1}_{\g}} \Lambda^{k-1}{\g^{*}}\]	 
	 Therefore, 
	 \[P^*_{k,\g}=\Lambda^k{\g^{*}}/im(d^{k-1}_{\g})\hookleftarrow ker(d^k_{\g}) /im(d^{k-1}_{\g})= H^k(\g).\]
\end{proof}

\begin{cor}
A weak moment exists if and only if the projection of $[\widetilde \omega]$ to
$$ \bigoplus_{k=1}^nH^k(\mathfrak g)\otimes H^{n-k+1}(M)$$ vanishes.
\end{cor}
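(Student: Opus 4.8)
The plan is to derive the corollary directly from Theorem~\ref{existence} together with Propositions~\ref{prop:hotexists} and~\ref{prop:weakexist} and the Kuenneth decomposition already set up in the proof of the theorem. First I would recall that, by Proposition~\ref{prop:weakexist}, a weak moment map exists if and only if $[\widehat\omega]=0$ in $H^{n+1}(\widehat C)$, and that the chain map $(res\otimes id)\colon \widetilde C\to\widehat C$ from \eqref{chainmap} sends $\widetilde\omega$ to $\widehat\omega+\phi$. Under the Kuenneth isomorphism $H^{n+1}(\widehat C)\cong\bigoplus_{k\geq 1}P^*_{k,\g}\otimes H^{n+1-k}(M)$, the $\phi$-term lives in the $k=n+1$ summand $P^*_{n+1,\g}\otimes H^0(M)$, whereas $\widehat\omega$ has components only in summands with $1\le k\le n$. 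Hence $[res\otimes id][\widetilde\omega]=0$ splits into two independent conditions: vanishing of the $k=n+1$ component (which is exactly $\phi\equiv 0$, using Lemma~\ref{phiandc} if one wants the cohomological phrasing, or directly since $H^0(M)$ carries no differential) and vanishing of the components for $1\le k\le n$.

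Next I would use the commutative diagram from the proof of Theorem~\ref{existence}: the map $[res\otimes id]$ corresponds, under the two vertical Kuenneth isomorphisms, to $[res]\otimes[id]$, and in each degree $k$ the map $[res]\colon H^k(\g)\to P^*_{k,\g}$ is injective (this was established at the end of that proof via the dualized exact sequence $0\to P_{k,\g}\to\Lambda^k\g\xrightarrow{\delta_k}\Lambda^{k-1}\g$). Therefore, for each fixed $k$, the component of $[res\otimes id][\widetilde\omega]$ in $P^*_{k,\g}\otimes H^{n+1-k}(M)$ vanishes if and only if the component of $[\widetilde\omega]$ in $H^k(\g)\otimes H^{n+1-k}(M)$ vanishes. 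Collecting the components for $1\le k\le n$, the vanishing of $\widehat\omega$'s class is equivalent to the vanishing of the projection of $[\widetilde\omega]$ to $\bigoplus_{k=1}^n H^k(\g)\otimes H^{n+1-k}(M)$.

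Finally I would assemble these observations: existence of a weak moment map $\iff$ $[\widehat\omega]=0$ $\iff$ the $k=1,\dots,n$ Kuenneth components of $[res\otimes id][\widetilde\omega]$ vanish $\iff$ the projection of $[\widetilde\omega]$ onto $\bigoplus_{k=1}^n H^k(\g)\otimes H^{n+1-k}(M)$ vanishes, which is the claim. I expect the main (minor) obstacle to be bookkeeping: making sure the identification $\widetilde\omega|_{P_{\ge1,\g}}=\widehat\omega+\phi$ and the degree in which $\phi$ sits are handled cleanly, so that the $k=n+1$ summand really is disjoint from the range $1\le k\le n$ and does not interfere with the stated projection; once that is clear the argument is a direct corollary of the theorem and the injectivity of $[res]$ in each degree.
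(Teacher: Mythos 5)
Your proposal is correct and follows essentially the same route as the paper: both arguments use the splitting $\widetilde\omega|_{P_{\geq 1,\g}}=\widehat\omega+\phi$, the Kuenneth decomposition placing $\phi$ in the $k=n+1$ summand and $\widehat\omega$ in the summands $1\leq k\leq n$, and the injectivity of $[res]\otimes[id]$ from the proof of Theorem~\ref{existence} to transfer the vanishing of $[\widehat\omega]$ back to the vanishing of the stated projection of $[\widetilde\omega]$. The only cosmetic difference is that you phrase the transfer as degree-wise injectivity of $[res]$ while the paper speaks of the preimage of $\varkappa([\widehat\omega])$ under $[res]\otimes[id]$; the content is identical.
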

\begin{proof}

{
Let $p^k_i$, where $i=1,...,dim\hspace{0.1cm}P_{k,\g}$ be a basis of $P_{k,\g}$. Then $\omega_k|_{P_{k,\mathfrak g}}$ can be written as $\omega_k|_{P_{k,\mathfrak g}}=\sum\limits_{i} (p^k_i)^*\otimes \omega_k(p^k_i)$. Note that, since each $\omega_k(p^k_i)$ is closed by Lemma \ref{closed}, each $\omega_k|_{P_{k,\mathfrak g}}$ is also closed under $\widehat{d}_{tot}=1\otimes d$. Thus, the image of $[\widehat{\omega} + \phi]$ under the Kuenneth isomorphism $\varkappa$ is
\begin{align*}
\varkappa([\widehat{\omega} + \phi]):&=\sum\limits_{k=1}^{n}(-1)^{k-1}\varkappa[\omega_k|_{P_{k,\mathfrak g}}] +(-1)^n\varkappa[\phi]\\
&=\sum\limits_{k=1}^{n}(-1)^{k-1}\sum\limits_{i} (p^k_i)^*\otimes [\omega_k(p^k_i)] + (-1)^n\varkappa[\phi]\\
\end{align*} 
so $\varkappa([\widehat{\omega} + \phi])$ can be divided into two parts:
$$\sum\limits_{k=1}^{n}(-1)^{k-1}\varkappa[\omega_k|_{P_{k,\mathfrak g}}]\in \bigoplus_{k=1}^n P_{k,\g^*}\otimes H^{n-k+1}(M)$$
and 
$$(-1)^n\varkappa[\phi]\in P_{n+1, \g^*}\otimes H^0(M).$$}

{In particular, $$\varkappa([\widehat{\omega}])= \sum\limits_{k=1}^{n}(-1)^{k-1}\varkappa[\omega_k|_{P_{k,\mathfrak g}}]\in \bigoplus_{k=1}^n P_{k,\g^*}\otimes H^{n-k+1}(M).$$}

{Since
$[res]\otimes[id](\varkappa[\widetilde{\omega}])=\varkappa([\widehat{\omega}]) + \varkappa([\phi]),$
it is clear from the nature of the map $[res]\otimes [id]$ that the preimage of  $\varkappa([\widehat{\omega}])$ under $[res]\otimes[id]$ is the projection of $\varkappa([\widetilde{\omega}])$ to $\bigoplus_{k=1}^nH^k(\mathfrak g)\otimes H^{n-k+1}(M).$}
{Since the map $[res]\otimes[id]$ is injective, $\varkappa([\widehat{\omega}])$ vanishes if and only if that preimage vanishes. Noting that the Kuenneth map is an isomoprphism, we get the statement of the corollary.}

\end{proof}

With this corollary, we can recover the following results from \cite{MR3815227}:

\begin{prop}[5.12 in \cite{MR3815227}]
	If $H^1(\g)=...=H^n(\g)=0$, then a weak moment map exists.
\end{prop}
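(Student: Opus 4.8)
The plan is to deduce this directly from the Corollary immediately preceding, which characterizes the existence of a weak moment map by the vanishing of the projection of $[\widetilde\omega]$ onto $\bigoplus_{k=1}^{n} H^k(\mathfrak g)\otimes H^{n-k+1}(M)$.

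First I would observe that the hypothesis $H^1(\mathfrak g)=\cdots=H^n(\mathfrak g)=0$ forces every summand $H^k(\mathfrak g)\otimes H^{n-k+1}(M)$ with $1\le k\le n$ to be zero, so the whole space $\bigoplus_{k=1}^{n} H^k(\mathfrak g)\otimes H^{n-k+1}(M)$ is trivial. Hence the projection of $[\widetilde\omega]$ onto it is automatically the zero class, and the Corollary yields a weak moment map.

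There is no real obstacle here: the substantive work — the double complexes $\widetilde C$ and $\widehat C$, their K\"unneth decompositions, and in particular the injectivity of $[res]\colon H^k(\mathfrak g)\to P^*_{k,\mathfrak g}$ established in the proof of Theorem \ref{existence} — has already been carried out. If one preferred to avoid invoking the Corollary, one could argue equivalently that $[\widehat\omega]\in H^{n+1}(\widehat C)$ is the image, under the injective map $[res]\otimes[id]$, of the relevant projection of $[\widetilde\omega]$, and that the source of this injection is the zero space under the stated vanishing hypothesis; then $[\widehat\omega]=0$ and Proposition \ref{prop:weakexist} gives the claim. Either route reduces the statement to a one-line consequence of the machinery of this section.
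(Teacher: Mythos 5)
Your argument is correct and is exactly the route the paper intends: the proposition is stated immediately after the corollary precisely because the hypothesis $H^1(\g)=\dots=H^n(\g)=0$ makes the target space $\bigoplus_{k=1}^{n} H^k(\g)\otimes H^{n-k+1}(M)$ trivial, so the projection of $[\widetilde\omega]$ vanishes automatically. The paper leaves this one-line deduction implicit, and your alternative phrasing via the injectivity of $[res]\otimes[id]$ and Proposition \ref{prop:weakexist} is just an unwinding of the same corollary.
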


\begin{thm}[5.6 in \cite{MR3815227}]
	If $H^0(\g, P_{k,\g})=0$ for $k=1,...,n$, then a weak moment map exists, where the $\g$-module structure on $P_{k,\g}$ is induced by the adjoint action.
\end{thm}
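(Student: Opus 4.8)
The plan is to deduce this from the Corollary together with the Kuenneth decomposition of $[\widetilde\omega]$, by showing that each summand $H^k(\mathfrak g)\otimes H^{n+1-k}(M)$ of the relevant projection vanishes under the hypothesis. By the Corollary, a weak moment map exists precisely when the projection of $[\widetilde\omega]$ to $\bigoplus_{k=1}^n H^k(\mathfrak g)\otimes H^{n+1-k}(M)$ is zero. So it suffices to analyze the individual Kuenneth components $[\widetilde\omega]_k \in H^k(\mathfrak g)\otimes H^{n+1-k}(M)$ for $1\le k\le n$ and prove that the hypothesis $H^0(\mathfrak g, P_{k,\mathfrak g})=0$ forces each of them to vanish.

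The key link is the interpretation of the image of $[\widehat\omega]$ under the Kuenneth map worked out in the proof of the Corollary: $\varkappa([\widehat\omega]) = \sum_{k=1}^n (-1)^{k-1} \sum_i (p_i^k)^*\otimes[\omega_k(p_i^k)] \in \bigoplus_{k=1}^n P_{k,\mathfrak g}^*\otimes H^{n+1-k}(M)$, and the fact that the preimage of $\varkappa([\widehat\omega])$ under the injective map $[res]\otimes[id]$ is the projection of $\varkappa([\widetilde\omega])$ to $\bigoplus_{k=1}^n H^k(\mathfrak g)\otimes H^{n+1-k}(M)$. Thus it is enough to show that under the hypothesis the map $p\mapsto[\iota_{v_p}\omega] = [\omega_k(p)]$ from $P_{k,\mathfrak g}$ to $H^{n+1-k}(M)$ is identically zero for each $k=1,\dots,n$; by Lemma \ref{vanish} this is exactly the condition for a weak moment map to exist. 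So the real content is: $H^0(\mathfrak g, P_{k,\mathfrak g})=0 \Rightarrow$ the map $\Phi_k: P_{k,\mathfrak g}\to H^{n+1-k}(M)$, $p\mapsto[\iota_{v_p}\omega]$, vanishes.

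To prove this, I would first observe that $\Phi_k$ is $\mathfrak g$-equivariant, where $\mathfrak g$ acts on $P_{k,\mathfrak g}$ by the (restricted) adjoint action and on $H^{n+1-k}(M)$ by the Lie derivative along the infinitesimal generators. The latter action is trivial: for $x\in\mathfrak g$ the vector field $v_x$ is (Hamiltonian, hence) such that $\pounds_{v_x}$ acts as zero on de Rham cohomology, since $\pounds_{v_x} = d\iota_{v_x} + \iota_{v_x} d$ is chain-homotopic to zero. (One can see equivariance of $\Phi_k$ directly from the Extended Cartan Formula of Lemma \ref{cartan} applied to $\pounds_{v_x}(\iota_{v_p}\omega)$, using that $\omega$ is $\mathfrak g$-invariant and closed, and Remark \ref{morphism}; the upshot is $\pounds_{v_x}(\iota_{v_p}\omega) = \iota_{v_{[x,p]}}\omega$ modulo exact forms, i.e. $\Phi_k(\mathrm{ad}_x\, p) = 0$ in cohomology for all $x$ because the left side is exact.) Hence the image of $\Phi_k$ lands in the $\mathfrak g$-invariants of $H^{n+1-k}(M)$, and moreover $\Phi_k$ kills $\mathrm{ad}_{\mathfrak g}\cdot P_{k,\mathfrak g}$. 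But then $\Phi_k$ factors through the coinvariants $P_{k,\mathfrak g}/(\mathrm{ad}_{\mathfrak g}\cdot P_{k,\mathfrak g}) = H_0(\mathfrak g, P_{k,\mathfrak g})$. Since $P_{k,\mathfrak g}$ is finite-dimensional, $H_0(\mathfrak g, P_{k,\mathfrak g})$ and $H^0(\mathfrak g, P_{k,\mathfrak g}^*)$ are dual; and the hypothesis $H^0(\mathfrak g, P_{k,\mathfrak g})=0$ — invariants of $P_{k,\mathfrak g}$ — needs to be matched to coinvariants. Here I would use that for finite-dimensional representations over a field of characteristic zero (and in the relevant situations, e.g. when $\mathfrak g$ is such that the invariants/coinvariants of $P_{k,\mathfrak g}$ agree, which is automatic when the relevant cohomology is computed over $\RR$ and one dualizes), invariants vanish iff coinvariants vanish — or, more carefully, follow exactly the argument of \cite{MR3815227} at this point, matching conventions. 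Vanishing of $H_0(\mathfrak g, P_{k,\mathfrak g})$ then forces $\Phi_k=0$, completing the proof.

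The main obstacle I anticipate is precisely the invariants-versus-coinvariants bookkeeping: the hypothesis is phrased with $H^0$ (invariants) of $P_{k,\mathfrak g}$, while the factorization naturally produces coinvariants $H_0$, and one must be careful about whether to dualize $P_{k,\mathfrak g}$ and about the $\mathfrak g$-module conventions (Remark \ref{morphism} and the adjoint action). The cleanest route is probably to show directly that $\Phi_k$ equals zero on all of $P_{k,\mathfrak g}$ by noting that its image is a $\mathfrak g$-invariant subspace of a space on which $\mathfrak g$ acts trivially, so $\Phi_k$ is a $\mathfrak g$-module map to a trivial module, and such a map is determined by its restriction to — and in fact factors through — the largest trivial quotient, which under the finite-dimensionality of $P_{k,\mathfrak g}$ one relates to $H^0(\mathfrak g,P_{k,\mathfrak g})$ via duality exactly as in \cite{MR3815227}. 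Everything else (the $\mathfrak g$-equivariance of $\Phi_k$, the triviality of the $\mathfrak g$-action on $H^*(M)$, the reduction via the Corollary and Lemma \ref{vanish}) is routine given the results already established in this section.
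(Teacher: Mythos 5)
Your reduction is sound as far as it goes: by Lemma \ref{vanish} it suffices to show that the maps $\Phi_k:P_{k,\g}\to H^{n+1-k}(M)$, $p\mapsto[\iota_{v_p}\omega]$, vanish, and your observation that $\Phi_k$ is $\g$-equivariant with trivial action on the target (so that it kills $\mathrm{ad}_\g\cdot P_{k,\g}$ and factors through the coinvariants $H_0(\g,P_{k,\g})=P_{k,\g}/(\mathrm{ad}_\g\cdot P_{k,\g})$) is correct --- the identity $\pounds_{v_x}\omega_k(p)=\omega_k(\mathrm{ad}_x p)$ you need is exactly Lemma \ref{omegainvariant}. This is a genuinely different route from the paper's: the paper instead uses the embedding $H^k(\g)\hookrightarrow P^*_{k,\g}=\Lambda^k\g^*/\mathrm{im}(d^{k-1}_\g)$ from the proof of Theorem \ref{existence}, the classical fact that the adjoint action on $H^k(\g)$ is trivial to get $H^k(\g)=H^k(\g)^\g\subset(P^*_{k,\g})^\g$, and then the preceding proposition ($H^1(\g)=\dots=H^n(\g)=0$ implies existence). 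Your argument is more direct and does not pass through the vanishing of $H^k(\g)$.

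However, the step you yourself flag is a genuine gap, and it does not close in the generality you invoke. Your factorization requires the \emph{coinvariants} $H_0(\g,P_{k,\g})$ (equivalently $(P^*_{k,\g})^\g$) to vanish, whereas the stated hypothesis gives vanishing of the \emph{invariants} $H^0(\g,P_{k,\g})=(P_{k,\g})^\g$. For finite-dimensional representations of a non-reductive Lie algebra over $\mathbb{R}$ these are not equivalent: take the two-dimensional Lie algebra with $[x,y]=y$ acting on itself by the adjoint action (and note $P_{1,\g}=\g$ always, since $\delta_1=0$); the invariants are the center, which is zero, while the coinvariants $\g/[\g,\g]$ are one-dimensional. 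So ``invariants vanish iff coinvariants vanish'' is false as a blanket statement, and deferring to \cite{MR3815227} does not discharge it. To be fair, the paper's own proof makes the same silent identification, writing $H^0(\g,P_{k,\g})=(P^*_{k,\g})^\g$; the hypothesis that actually powers both arguments is $(P^*_{k,\g})^\g=0$. Under that reading your factorization through coinvariants finishes the proof immediately (and more cleanly than the paper); under the literal reading of $H^0(\g,P_{k,\g})$ as invariants of $P_{k,\g}$, the bridge from invariants to coinvariants is missing from your argument, and you should either restate the hypothesis in terms of $(P^*_{k,\g})^\g$ or add an assumption (e.g.\ $\g$ reductive) under which the two agree.
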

\begin{proof}
	Recall that the zeroth Lie algebra cohomology $H^0(\g, \mathcal M)$ equals the subspace of $\g$ invariants $\mathcal M^\g$ in $\mathcal M$. As $H^k(\g)\subset P^*_{k,\g}$, we have  $H^k(\g)^\g\subset (P_{k,\g}^*)^\g$. But $H^k(\g)^\g=H^k(\g)$, so if $H^0(\g, P_{k,\g})=(P_{k,\g}^*)^\g=0$ for $k=1,...,n$, then also $H^k(\g)=0$ and a weak moment exists.
\end{proof}	

Below we provide two examples illustrating the necessity of requiring $\phi$ to vanish in the assumptions of Theorem \ref{existence}. The first example deals with the more familiar symplectic case:

\begin{ex} \label{symplecticexplicit}
Let $n=1$, i.e., consider a connected Lie group $G$ acting on a {connected} symplectic manifold $(M,\omega)$. 
{In this case, a weak moment map is a map $\widehat{ f}:\g\to C^{\infty}(M)$ that satisfies \[d\widehat{ f}(x)=-\iota_{v_x}\omega,\] 
and a homotopy moment map is a map 
$\widetilde{ f}:\g\to C^{\infty}(M)$ that satisfies
\begin{align*} \label{symplecticmomap}
d\widetilde{ f}(x)&=-\iota_{v_x}\omega\\
\widetilde{ f}([x,y])&=\{\widetilde{ f}(x),\widetilde{ f}(y)\}=\omega(v_x,v_y).
\end{align*}  In symplectic geometry, the latter map is called an \emph{equivariant moment map}.
It is then well-known (see, e.g., \cite[\S 26 ]{Ana}) that if there exists a weak moment map, then the obstruction to the existence of an equivariant moment map lies in $H^2(\g).$} 

{More specifically, let $\widehat{ f}$ be a weak moment map. Consider 
\begin{align*} 	
	h(x,y):&=\{\widehat{f}(x),\widehat{f}(y)\} - \widehat{ f}([x,y])\\
	 &=\omega(v_x,v_y) - \widehat{ f}([x,y]).
\end{align*}
Since $d(\omega(v_x,v_y))=-\iota_{[v_x,v_y]}(\omega)=d\widehat{ f}([x,y]),$ it follows that $h(x,y)$ is a constant function on $M$, and therefore it defines an element $h\in \Lambda^2\g^*$.
Evaluating $h({x,y})$ at any point $m\in M$, we get
\begin{align*}
h(x,y)&=\omega(v_x,v_y)|_{m} - \widehat{ f}|_{m}([x,y])\\
&=c^{\g}_m(x,y) + d_{\g}\widehat{f}(x,y)|_{m}.
\end{align*}
If we assume that $c^{\g}_m$ is exact, then $h\in \Lambda^2\g^*$ is exact, i.e., there exists $b\in \g^*$ such that $h=d_{\g}b.$
Then $\widetilde{ f}:=\widehat{ f}-b$ is an equivariant moment map. Indeed,
\begin{align*}
\widetilde{f}([x,y])-\omega(v_x,v_y)&=\widehat{f}([x,y])-b([x,y])-\omega(v_x,v_y)\\
&=\widehat{f}([x,y])+d_{\g}b(x,y)-\omega(v_x,v_y)\\
&=\widehat{f}([x,y])+h(x,y)-\omega(v_x,v_y)\\
&=0
\end{align*} 
Note that, by Lemma \ref{phiandc}, the exactness of $c^{\g}_m$ is equivalent to the vanishing of $\phi$. Thus, if there exists a weak moment map for a Lie group acting on a symplectic manifold, then there exists a homotopy moment map if and only if $\phi\equiv0$.} 
\end{ex}	

Thus, the example above yields many instances in symplectic geometry where a weak moment map exists, but a homotopy moment map does not. 
The next example illustrates such a case in 2-plectic geometry.

\begin{ex}[\cite{FRZ}]
Let $G$ be a connected compact semi-simple Lie group acting on itself by left multiplication. The Lie algebra $\g$ of $G$ is equipped with a non-degenerate skew-symmetric form defined by
\begin{equation*} \label{cartanform}
\theta(x,y,z):=\langle x,[y,z]\rangle,
\end{equation*}
where $\langle\_,\_\rangle$ is the Killing form. Let $\omega$ be the left-invariant form which equals $\theta$ at the identity element $e$. This form is closed and non-degenerate, i.e., $(G,\omega)$ is a 2-plectic manifold.
It is well-known that $H^1(\g)=H^2(\g)=0$ (see, e.g., \cite{chevalley1948cohomology}), so a weak homotopy moment map exists for this action. However, the map $\phi$ defined in Lemma \ref{condition} does not vanish, hence, there is no homotopy moment map.
\end{ex}

In the next example, we will {explicitly} construct a {2-plectic} homotopy moment map from a weak moment map.

\begin{ex} \label{so(3)}
	Consider the action of $SO(3)$ on $(M=\RR^3, \omega=dx_1\wedge dx_2\wedge dx_3)$ by rotations. Note that, since $H^1(M)=0$, the first component of a weak moment map for this action exists. 
	The Lie algebra $\mathfrak{so}(3)$ is spanned by the following elements:
	\vspace{0.2cm}
	\begin{center}
		$e_1=\begin{pmatrix}
		0 & -1 & 0\\
		1 & 0 & 0\\
		0 & 0 & 0
		\end{pmatrix},$ $e_2=\begin{pmatrix}
		0 & 0 & -1\\
		0 & 0 & 0\\
		1 & 0 & 0
		\end{pmatrix},$ $e_3=\begin{pmatrix}
		0 & 0 & 0\\
		0 & 0 & -1\\
		0 & 1 & 0
		\end{pmatrix}$. 
	\end{center}

	\vspace{0.2cm}
	These elements satisfy the following bracket relations: 
	
	\begin{center}
		$[e_1,e_2]=e_3$,$[e_1,e_3]=-e_2$,$[e_2,e_3]=e_1,$
	\end{center}
	
	therefore, $P_{2,\mathfrak{so}(3)}\equiv 0,\hspace{0.1cm} P_{3,\mathfrak{so}(3)}=\Lambda^3\mathfrak{so}(3)$, and a weak homotopy moment map for this action has only one component. Note further, that the orbits of this action are of dimensions 0 and 2, hence $\phi\in  P^*_{3,\mathfrak{so}(3)} \otimes C^{\infty}(M)=\Lambda^3\mathfrak{so}(3)^*\otimes C^{\infty}(M)$ defined in Lemma \ref{condition} vanishes. Therefore, by Theorem \ref{existence}, there exists a homotopy moment map for this action.
	
	The infinitesimal generators $v_i$ corresponding to elements $e_i$ are given by 
	\begin{center}
		$v_1=x_2\frac{\partial}{\partial x_1}-x_1\frac{\partial}{\partial x_2}$,
		$v_2=x_3\frac{\partial}{\partial x_1}-x_1\frac{\partial}{\partial x_3}$,
		$v_3=x_3\frac{\partial}{\partial x_2}-x_2\frac{\partial}{\partial x_3}.$
	\end{center}
	Defining 
	\begin{center} 
		$\widehat{f}_1(e_1)=\omega(v_2,v_3)$,
		$\widehat{f}_1(e_2)=-\omega(v_1,v_3)$,
		$\widehat{f}_1(e_3)=\omega(v_1,v_2)$
	\end{center} 
	on basis elements gives the weak moment map.
	
	Note that $\widetilde{ f}=(\widetilde{ f}_1=\widehat{f}_1, \widetilde{ f}_2\equiv 0)$ is a pre-image of $\widehat{f}$ under the map \eqref{chainmap}. To construct a homotopy moment map out of $\widehat{f}$, note that $\widetilde{\psi}:=\widetilde{\omega} -\widetilde{d}_{tot}\widetilde{ f}$ is $\widetilde{d}_{tot}$-exact and the sum of any primitive with $\widetilde{f}$ gives a homotopy moment map. So, to construct a homotopy moment map, we need to find a primitive of $\widetilde{\psi}$, i.e., find a $\widetilde{h}=(h_1,h_2)\in \widetilde{C}^2$ that satisfies the following equations:
	\begin{align*} 
	-dh_1&=0\\
	\nonumber d_{\g}h_1 +dh_2 &= -\omega_2 - d_{\g}\widehat{ f}_1\\
	\nonumber d_{\g}h_2&=\omega_3.
	\end{align*}
	Note that $\omega_3=0$. Also, evaluating $-\omega_2 - d_{\g}\widehat{ f}_1$ on basis elements, we get, using the definition of $\widehat{f}_1$ and bracket relations between the $e_i$, \[\omega_2(e_i,e_j) {-}f_1([e_i,e_j])=0.\]
	Therefore the equations above become:
	\begin{align*}
	-dh_1&=0\\
	d_{\g}h_1 +dh_2 &= 0\\
	d_{\g}h_2&=0.
	\end{align*}
	Note that the last equation is satisfied for any $h_2\in \Lambda^2{\mathfrak{so}(3)}^*\otimes C^{\infty}(M)$, since  $P_{3,\mathfrak{so}(3)}=\Lambda^3\mathfrak{so}(3)$. Therefore, the equations above become:
	\begin{align*}
	-dh_1&=0\\
	d_{\g}h_1 +dh_2 &= 0.
	\end{align*}
	It is easy to see that a trivial map $h_1\equiv 0, h_2\equiv 0$ satisfies this equation, and therefore, $\widetilde{ f}=(\widetilde{ f}_1=\widehat{f}_1, \widetilde{ f}_2\equiv 0)$ is a homotopy moment map for this action.
	
\end{ex}

\begin{remark} According to \cite[Thm. 9.6]{FRZ},
	if $G$ acts on a connected $n$-plectic manifold $(M,\omega)$ for which $H^i(M)=0$ for $1\leq i\leq n-1$, and \[[c_m^{\g}]=0\] for $c_m^{\g}$ defined in Remark \ref{classc}, then there exists a homotopy moment map for this action. In the above example $\phi\equiv 0$, hence, by Lemma \ref{phiandc}, the assumptions of the theorem stated above are satisfied, and a homotopy moment map exists. Thus, the example above is consistent with \cite[Thm. 9.6]{FRZ}.
\end{remark}

\subsection{Strict extensions} The fact that, assuming $\phi\equiv 0$, the existence of weak moment maps implies the existence of homotopy moment maps raises the following question: {Given a weak moment map and assuming $\phi\equiv0$, does there always exist a homotopy moment map that restricts to the given weak moment map?} 
The following proposition answers this question:

\begin{prop}\label{extension}
Let $\widehat{ f}$ be a weak moment map, and $\phi=0$. There exists a well-defined class $[\gamma]_{\widetilde{d}_{tot}}\in H^{n+1}(\widetilde{C})$ such that the following are equivalent:
\begin{enumerate}
\item $[\gamma]_{\widetilde{d}_{tot}}=0$ and $\gamma$ admits a primitive in 
$\bigoplus_{k=1}^{n} d_\g(\Lambda^{k}\g^*)\otimes \Omega^{n-k-1}(M)$
\item There exists a homotopy moment map $\widetilde f$, such that $\widetilde f|_{P_\g}=\widehat f$.
\end{enumerate}
\end{prop}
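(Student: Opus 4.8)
The idea is to set up a relative obstruction, exactly parallel to the ``absolute'' obstruction $[\widetilde\omega]\in H^{n+1}(\widetilde C)$ governing the existence of a homotopy moment map (Proposition \ref{prop:hotexists}), but now measuring the failure to extend the \emph{given} weak moment map $\widehat f$. Since $\phi=0$ we have $\widetilde\omega|_{P_{\geq 1,\g}}=\widehat\omega$, and $\widehat f$ provides $\widehat\alpha=(\zeta(k)^{-1}\widehat f_k)$ with $\widehat d_{tot}\widehat\alpha=\widehat\omega$. The first step is to choose any lift $\widetilde\beta\in\bigoplus_{k=1}^{n}\Lambda^k\g^*\otimes\Omega^{n-k}(M)$ of $\widehat\alpha$ along the restriction chain map $(res\otimes\mathrm{id})$ of \eqref{chainmap} — e.g.\ extend each functional on $P_{k,\g}$ arbitrarily to $\Lambda^k\g$. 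Define
\[
\gamma:=\widetilde\omega-\widetilde d_{tot}\widetilde\beta.
\]
Because $(res\otimes\mathrm{id})$ is a chain map sending $\widetilde\omega\mapsto\widehat\omega$ and $\widetilde\beta\mapsto\widehat\alpha$, we get $(res\otimes\mathrm{id})(\gamma)=\widehat\omega-\widehat d_{tot}\widehat\alpha=0$; that is, $\gamma$ lands in the subcomplex $\ker(res\otimes\mathrm{id})=\bigl(\bigoplus_k d_\g^{k-1}(\Lambda^{k-1}\g^*)\bigr)\otimes\Omega(M)$, using the dualized exact sequence $\Lambda^{k-1}\g^*\xrightarrow{d_\g^{k-1}}\Lambda^k\g^*\xrightarrow{\pi}P_{k,\g}^*\to0$ from the proof of Theorem \ref{existence}. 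Since $\widetilde d_{tot}\gamma=\widetilde d_{tot}\widetilde\omega=0$ (as $\widetilde\omega$ is a cocycle, cf.\ Proposition \ref{prop:hotexists}) and $\gamma$ is $\widetilde d_{tot}$-exact by construction, $[\gamma]_{\widetilde d_{tot}}=0$ in $H^{n+1}(\widetilde C)$ automatically; the content of condition (1) is therefore \emph{not} that the class vanishes in $H^{n+1}(\widetilde C)$ but that $\gamma$ admits a primitive living in the smaller space $\bigoplus_{k=1}^{n}d_\g(\Lambda^{k}\g^*)\otimes\Omega^{n-k-1}(M)$ — i.e.\ a primitive that is itself in the image of $d_\g$ on the Lie-algebra factor, which is exactly what guarantees it maps to zero under $(res\otimes\mathrm{id})$ and hence leaves $\widehat\alpha$ unperturbed. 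I should check that the class $[\gamma]$, and more precisely the obstruction ``does $\gamma$ have a primitive of this restricted form,'' is independent of the choice of lift $\widetilde\beta$: changing $\widetilde\beta$ by something in $\ker(res\otimes\mathrm{id})$ changes $\gamma$ by a $\widetilde d_{tot}$-coboundary of an element of that same subcomplex, which does not affect the existence of a restricted primitive. (If one prefers, phrase the whole obstruction intrinsically as an element of the cohomology of the subcomplex $\ker(res\otimes\mathrm{id})$, or of the cone of $(res\otimes\mathrm{id})$; I will state it as in the proposition to match the hypotheses already in place.)

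Now the equivalence. For $(1)\Rightarrow(2)$: suppose $\gamma=\widetilde d_{tot}\eta$ with $\eta\in\bigoplus_{k=1}^{n}d_\g(\Lambda^{k}\g^*)\otimes\Omega^{n-k-1}(M)$. Set $\widetilde\alpha:=\widetilde\beta+\eta$. Then $\widetilde d_{tot}\widetilde\alpha=\widetilde d_{tot}\widetilde\beta+\gamma=\widetilde\omega$, so by Proposition \ref{prop:hotexists} the maps $\widetilde f_k:=\zeta(k)\widetilde\alpha_k$ form a homotopy moment map. Moreover $(res\otimes\mathrm{id})(\eta)=0$ because $\eta$ lies in $\bigl(\bigoplus_k d_\g\Lambda^k\g^*\bigr)\otimes\Omega(M)\subset\ker(res\otimes\mathrm{id})$ (each $d_\g(\Lambda^{k}\g^*)$ restricts to $0$ on $P_{k+1,\g}$, indeed on $P_{k,\g}$ too), hence $\widetilde\alpha$ restricts to $\widehat\alpha$ and $\widetilde f|_{P_\g}=\widehat f$ up to the normalizing constants $\zeta(k)$, which match by design. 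Conversely, for $(2)\Rightarrow(1)$: given a homotopy moment map $\widetilde f$ extending $\widehat f$, put $\widetilde\alpha:=(\zeta(k)^{-1}\widetilde f_k)$, so $\widetilde d_{tot}\widetilde\alpha=\widetilde\omega$ and $(res\otimes\mathrm{id})\widetilde\alpha=\widehat\alpha=(res\otimes\mathrm{id})\widetilde\beta$. Then $\eta:=\widetilde\alpha-\widetilde\beta$ satisfies $\widetilde d_{tot}\eta=\widetilde\omega-\widetilde d_{tot}\widetilde\beta=\gamma$ and $(res\otimes\mathrm{id})\eta=0$, i.e.\ $\eta\in\ker(res\otimes\mathrm{id})=\bigl(\bigoplus_k d_\g\Lambda^{k}\g^*\bigr)\otimes\Omega(M)$; writing out the bidegree components, $\eta_k\in d_\g(\Lambda^{k-1}\g^*)\otimes\Omega^{n-k}(M)$ for $k=1,\dots,n$, which after re-indexing is precisely a primitive of $\gamma$ of the form required in (1). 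So $\gamma$ is $\widetilde d_{tot}$-exact with a primitive of the prescribed type.

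\textbf{Main obstacle.} The delicate part is bookkeeping around the exact sequence $\Lambda^{k-1}\g^*\xrightarrow{d_\g}\Lambda^k\g^*\xrightarrow{\pi}P_{k,\g}^*\to 0$: one must verify that $\ker(res\otimes\mathrm{id})$ really equals $\bigl(\bigoplus_k\ima d_\g^{k-1}\bigr)\otimes\Omega(M)$ in each bidegree and then match up the indexing so that ``$\eta_k\in\ima(d_\g)\otimes\Omega^{n-k}(M)$ for $1\le k\le n$'' lines up with ``$\bigoplus_{k=1}^{n}d_\g(\Lambda^{k}\g^*)\otimes\Omega^{n-k-1}(M)$'' in the statement (a shift $k\mapsto k-1$, and one should double-check the edge cases $k=1$ and $k=n$, where $\widetilde f_{n+1}\equiv0$ and $\widetilde f_0\equiv0$ force the corresponding components to vanish, consistently with the ranges given). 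A secondary point requiring care is the well-definedness of $[\gamma]_{\widetilde d_{tot}}$ — or rather of the stronger ``restricted-primitive'' obstruction — under the choice of lift $\widetilde\beta$ and under modification of $\widehat f$ within its restriction class; this is where one uses that $\phi=0$ so that $\widehat\omega$ (and not $\widehat\omega+\phi$) is genuinely in the image of $\widehat d_{tot}$.
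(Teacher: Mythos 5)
Your construction is essentially the paper's own proof: take the potential $\widehat\alpha$ of $\widehat\omega$ corresponding to $\widehat f$, lift it to $\widetilde\beta$ along the restriction map \eqref{chainmap}, set $\gamma=\widetilde\omega-\widetilde d_{tot}\widetilde\beta$, observe $\gamma\in\ker(res\otimes\mathrm{id})=d_\g(\Lambda^{\geq1}\g^*)\otimes\Omega(M)$, check independence of the lift, and run the same two-directional argument. One side remark is misstated: $\gamma$ is \emph{not} ``$\widetilde d_{tot}$-exact by construction'' --- by construction one only gets $[\gamma]=[\widetilde\omega]$, and the vanishing of this class follows from the standing hypotheses via Theorem \ref{existence} (or from the second clause of (1), which subsumes the first), not from the definition of $\gamma$; this slip does not affect the equivalence you prove.
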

\begin{proof}
Let $\widehat \alpha=\widehat \alpha_1+...+\widehat \alpha_n\in \widehat C$ be a potential of $\widehat{\omega}\in \widehat C$ corresponding to $\widehat{ f}$ under the bijection in Proposition \ref{prop:weakexist}. Let $\beta \in \widetilde C$ be any preimage of $\widehat \alpha$ under the map \eqref{chainmap}. Such a preimage exists as the restriction $\widetilde{C}\to \widehat{C}$ is surjective. However, it might not be a potential of $\widetilde{ \omega}$. We can thus consider the element $\gamma=\widetilde{\omega}- \widetilde d_{tot} \beta\in \widetilde C$. Note that $\gamma \in ker(res\otimes id)=d_\g(\Lambda^{\geq1}\g^*)\otimes \Omega(M).$

First of all, we will show that $\gamma$ admitting a primitive in $d_{\g}(\Lambda^i\g)\otimes \Omega^{n-i}(M)$ does not depend on the choice of $\beta$. Indeed, if $\gamma=\widetilde{ \omega}-\widetilde{d}_{tot}\beta=\widetilde{d}_{tot}\mu$ for $\mu\in d_\g(\Lambda^{\geq1}\g^*)\otimes \Omega(M)$, then choosing a $\beta'=\beta+b$ yields $\gamma'=\widetilde{\omega}-\widetilde{d}_{tot}(\beta')=\widetilde{\omega}-\widetilde{d}_{tot}(\beta + b)=\widetilde{d}_{tot}\mu-\widetilde{d}_{tot}b=\widetilde{d}_{tot}(\mu - b)$. Now note that $b\in\ker(res\otimes id)=d_\g(\Lambda^{\geq1}\g^*)\otimes \Omega(M).$

Now, let's assume there exists a $\beta$ that corresponds to a homotopy moment map restricting to $\widehat{a}$.
Then $\gamma=\widetilde{\omega}-\widetilde{d}_{tot}\beta=0$. Choosing another $\beta'=\beta+b$, we have $\gamma=\widetilde{\omega}-\widetilde{d}_{tot}\beta'=\widetilde{\omega}-\widetilde{d}_{tot}(\beta+b)=-\widetilde{d}_{tot}b$. Again, note that $b\in\ker(res\otimes id)=d_\g(\Lambda^{\geq1}\g^*)\otimes \Omega(M).$

Conversely, assume that for some $\beta$, $\gamma=\widetilde{\omega}-\widetilde{d}_{tot}\beta=\widetilde{d}_{tot}\mu$ for $\mu\in d_\g(\Lambda^{\geq1}\g^*)\otimes \Omega(M)$. Then, $\widetilde{\omega}=\widetilde{d}_{tot}(\beta + \mu)$, i.e., $\beta+\mu$ corresponds to a homotopy moment map that restricts to $\widehat{ f}$, since $\mu\in ker(res\otimes id)$.

\end{proof}

\begin{remark}\label{gamman}
{Denote by $\gamma_{i+1}$ the component of $\gamma$ in $d_{\g}\Lambda^i\g^*\otimes \Omega^{n-i}(M)$. Note that, since $\widetilde{d}_{tot}=1\otimes d$ on $d_\g(\Lambda^{\geq1}\g^*)\otimes \Omega(M)$, it follows from $d_{tot}\gamma=0$ that $d\gamma_i=0$ for all $\gamma_i$. Requiring $\gamma$ to have a primitive $\mu\in d_\g(\Lambda^{\geq1}\g^*)\otimes \Omega(M)$ is equivalent to saying that each $\gamma_{i+1}=d\eta_i$, where $\eta_i\in d_{\g}\Lambda^i\g^*\otimes \Omega^{n-i-1}(M)$.}
	
	Indeed, suppose $\gamma=\widetilde{d}_{tot}\mu$, where $\mu\in d_\g(\Lambda^{\geq1}\g^*)\otimes \Omega(M)$. Denote the component of $\mu$ in $d_{\g}\Lambda^i\g^{*}\otimes \Omega^{n-i-1}(M)$ by $\mu_i$. Then $\gamma_{i+1}=(-1)^{i+1}d\mu_i.$
	
	Conversely, let each $\gamma_{i+1}$ satisfy $\gamma_{i+1}=d\eta_i$ for some $\eta_i\in d_{\g}\Lambda^i\g^*\otimes \Omega^{n-i-1}(M).$ Then $\widetilde{d}_{tot}(\sum\limits_{i}(-1)^{i+1}\eta_i)=\gamma$.
	
{Also note that $\gamma_{n+1}\in d_{\g}\Lambda^n\g^*\otimes C^{\infty}(M)$, and therefore $\gamma_{n+1}=d\eta$ 
if and only if $\gamma_{n+1}=0$.}
\end{remark}

\begin{cor} \label{hom}
{Let $G$ act on an n-plectic manifold $M$, let $\widehat{ f}$ be a weak moment map for this action, and let $\gamma$ be defined as in Proposition \ref{extension}. If $H^i(M)=0$ for $i\in \{1,...,n-1\}$, then there exists a homotopy moment map restricting to $\widehat{ f}$ if and only if $\gamma_{n+1}=0$.}
\end{cor}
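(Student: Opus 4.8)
The plan is to combine Proposition~\ref{extension} with Remark~\ref{gamman}. By Proposition~\ref{extension}, a homotopy moment map restricting to $\widehat{f}$ exists if and only if $[\gamma]_{\widetilde{d}_{tot}}=0$ and $\gamma$ admits a primitive in $\bigoplus_{k=1}^{n} d_\g(\Lambda^{k}\g^*)\otimes \Omega^{n-k-1}(M)$; by Remark~\ref{gamman} this is equivalent to each component $\gamma_{i+1}\in d_\g\Lambda^i\g^*\otimes\Omega^{n-i}(M)$ being of the form $d\eta_i$ with $\eta_i\in d_\g\Lambda^i\g^*\otimes\Omega^{n-i-1}(M)$. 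So the statement to prove is: under the hypothesis $H^i(M)=0$ for $i\in\{1,\dots,n-1\}$, the condition ``$\gamma_{i+1}=d\eta_i$ for all $i=1,\dots,n$'' reduces to the single condition $\gamma_{n+1}=0$.

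First I would unwind the degrees. For $i=1,\dots,n-1$, the component $\gamma_{i+1}$ lives in $d_\g\Lambda^i\g^*\otimes\Omega^{n-i}(M)$, and we are asking for a primitive with values in $\Omega^{n-i-1}(M)$; since $1\le n-i\le n-1$, closedness of $\gamma_{i+1}$ (which holds automatically, as noted in Remark~\ref{gamman}, because $\widetilde d_{tot}\gamma=0$) together with $H^{n-i}(M)=0$ guarantees such a primitive exists. One has to be slightly careful that the de Rham exactness can be performed ``coefficient-wise'' in the $d_\g\Lambda^i\g^*$ factor: writing $\gamma_{i+1}=\sum_j \xi_j\otimes \beta_j$ with $\xi_j$ a basis of $d_\g\Lambda^i\g^*$ and $\beta_j\in\Omega^{n-i}(M)$ closed, each $\beta_j=d\beta_j'$ by the vanishing of $H^{n-i}(M)$, and then $\eta_i:=\sum_j\xi_j\otimes\beta_j'$ works. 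So for $i=1,\dots,n-1$ the required primitive always exists, with no further constraint.

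The remaining components are $i=0$ and $i=n$. For $i=0$: the component $\gamma_1$ would lie in $d_\g\Lambda^0\g^*\otimes\Omega^{n}(M)$, but $\Lambda^0\g^*=\RR$ and $d_\g$ on it is zero, so $d_\g\Lambda^0\g^*=0$ and $\gamma_1=0$ trivially (indeed $\gamma\in d_\g(\Lambda^{\ge1}\g^*)\otimes\Omega(M)$ has no degree-zero-in-$\g$ part). For $i=n$: $\gamma_{n+1}\in d_\g\Lambda^n\g^*\otimes C^\infty(M)=d_\g\Lambda^n\g^*\otimes\Omega^0(M)$, and the sought primitive $\eta_n$ would have to lie in $d_\g\Lambda^n\g^*\otimes\Omega^{-1}(M)=0$; hence the condition ``$\gamma_{n+1}=d\eta_n$ for some $\eta_n$'' is equivalent to $\gamma_{n+1}=0$. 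This is exactly the last sentence of Remark~\ref{gamman}, which I would simply cite. Putting the three cases together: the conjunction over $i=0,\dots,n$ of ``$\gamma_{i+1}$ has a primitive in the prescribed space'' collapses to the single condition $\gamma_{n+1}=0$, and by Proposition~\ref{extension} (noting also that $[\gamma]_{\widetilde d_{tot}}=0$ is automatic once all components are exact) this is equivalent to the existence of a homotopy moment map restricting to $\widehat f$.

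The only genuine subtlety — and thus the main point to get right — is the ``coefficient-wise'' exactness argument: one must make sure that because the $d_\g$-factor carries the zero differential and is finite-dimensional, de Rham exactness in the $M$-factor of a $\widetilde d_{tot}$-closed element of $d_\g\Lambda^i\g^*\otimes\Omega^{n-i}(M)$ can be realized within the same $d_\g\Lambda^i\g^*$ tensor factor, so that the resulting $\eta_i$ genuinely lies in $d_\g\Lambda^i\g^*\otimes\Omega^{n-i-1}(M)$ as required by Remark~\ref{gamman}. Everything else is bookkeeping of degrees. I would therefore structure the proof as: (i) recall from Proposition~\ref{extension} and Remark~\ref{gamman} the reformulation in terms of the $\gamma_{i+1}$; (ii) dispatch $i=0$ and $i=n$; (iii) handle $1\le i\le n-1$ via $H^{n-i}(M)=0$ and the coefficient-wise argument; (iv) conclude.
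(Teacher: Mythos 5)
Your proposal is correct and follows exactly the route the paper intends: the corollary is stated without proof as an immediate consequence of Proposition~\ref{extension} and Remark~\ref{gamman}, with $H^{n-i}(M)=0$ supplying the primitives of the closed components $\gamma_{i+1}$ for $1\le i\le n-1$ and the last sentence of the remark reducing the $i=n$ case to $\gamma_{n+1}=0$. Your extra care about realizing the de Rham primitives coefficient-wise within the finite-dimensional factor $d_\g\Lambda^i\g^*$ is a correct and welcome elaboration of a point the paper leaves implicit.
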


\begin{defi}
Let $\widehat{ f}$ be a weak moment map for an action of $G$ on $(M,\omega)$. A homotopy moment map $\widetilde{ f}$ is called a \emph{strict extension} of $\widehat{ f}$ if $\widetilde{ f}|_{P_{\g}}=\widehat{ f}$, i.e., if $\widetilde{ f}$ restricts to $\widehat{ f}.$
\end{defi}

\begin{ex}
For $n=1$, i.e., in symplectic geometry, $P_{1,\g}=\g.$ Therefore, if a given weak moment map is not already a homotopy moment map, there is no homotopy moment map restricting to it.

To see this in terms of the results of Prop. \ref{extension}, let $\widehat{ f}$ be a symplectic weak moment map. {Note that in this case $\gamma=\gamma_2$, and so by the Remark \ref{gamman} and Proposition \ref{extension}, there exists a homotopy moment map restricting to $\widehat{ f}$ if and only 	
\begin{align*}
\gamma(x,y) &=-\omega_2(x,y) -d_{\g}\widehat{f}(x,y)\\
&=-\omega(v_x,v_y)+\widehat{ f}([x,y])\\
\end{align*} 
vanishes, i.e., if and only if $\widehat{ f}$ is already an equivariant moment map, i.e., a homotopy moment map.}

{For an example of a symplectic weak moment map that cannot be strictly extended to a homotopy moment map, consider a Lie algebra $\g$ such that $H^1(\g)=0$. If there exists a homotopy moment map $\widetilde{f}$ for the action of $\g$, then it is unique (see, e.g., \cite[\S 26]{Ana}). On the other hand, for an arbitrary nonzero $\xi \in \g^*$ the map $\widehat{f}:=\widetilde{f}+\xi$,  satisfies the condition 
	$d\widehat{f}(x)=-\iota_{v_x}\omega$
	for all $x\in \g$, i.e., is a weak moment map, but not a homotopy moment map.}
\end{ex}

\begin{ex}Consider the homotopy moment map $\widetilde{ f}$ constructed in Example \ref{so(3)} for the action of $SO(3)$ on $(\RR^3, \omega=dx_1\wedge dx_2\wedge dx_3)$, given by 
	$\widetilde{f}_1(e_1)=\omega(v_2,v_3)$,
	$\widetilde{f}_1(e_2)=-\omega(v_1,v_3)$,
	$\widetilde{f}_1(e_3)=\omega(v_1,v_2)$
and $\widetilde{f}_2\equiv 0$.
This homotopy moment map coincided with the original weak moment map, i.e., in this case the original weak moment map admitted an obvious "strict extension" to a homotopy map. To see this in the context of Proposition \ref{extension}, note that in this case 
\begin{align*}
\gamma&=\omega_1 - \omega_2 + \omega_3 - d_{\g}\widetilde{f}_1 + d\widetilde{ f}_1\\
&=\omega_3\\
&=0,
\end{align*}
since in this case $\omega_2(e_i,e_j) {-}\widetilde{ f}_1([e_i,e_j])=0$ and $\omega_3=\phi=0$ (see the discussion in Example \ref{so(3)}),
i.e., {$\gamma=\gamma_3$ vanishes in this example, and by Remark \ref{gamman} there indeed exists a strict extension of the weak moment map to a homotopy moment map.}

{Moreover, any weak moment map for this action can be strictly extended to a homotopy moment map. Indeed, let $\widehat{ f}=(\widehat{ f}_1,\widehat{ f}_2\equiv 0)$ be a weak moment map for this action. The equations for a homotopy moment map are 
\begin{align*}
d\widetilde{ f}_1&=-\iota_{v_x}\omega\\
\widetilde{ f}_1([x,y])&=d\widetilde{ f}_2(x,y)+\omega(v_x,v_y)\\
-\widetilde{ f}_2(\delta(x,y,z))&=-\omega(v_x,v_y,v_z).
\end{align*}
Note that any $\widetilde{ f}_2\in \Lambda^2\mathfrak{so}(3)^*\otimes C^{\infty}(M)$ restricts to $\widehat{f}_2\equiv 0$ and satisfies the third equation above, since $P_{2,\mathfrak{so}(3)}=0$, $P_{3,\mathfrak{so}(3)}=\Lambda^3\mathfrak{so}(3)$, and $\omega_3\equiv0$. 
Also note that, if the first one of the above equations is satisfied, then $d\widetilde{ f}_1([x,y])=-\iota_{[v_x,v_y]}=d\omega(v_x,v_y)$, and therefore the difference $\widetilde{ f}_1([x,y])-\omega(v_x,v_y)$ is a closed 1-form on $\RR^3$ for all $x,y\in \mathfrak{so}(3)$. Since $H^1(\RR^3)=0$, this form is exact, and there exists a $\widetilde{f}_2$ satisfying the second of the equations above. Therefore, $\widetilde{ f}_1=(\widehat{ f}_1, \widetilde{ f}_2)$, is a homotopy moment map that restricts to the given weak moment map $\widehat{ f}=(\widehat{ f}_1,\widehat{ f}_2\equiv 0)$. {This result is consistent with the Corollary \ref{hom} and the fact that $\gamma_3=\omega_3 - d_{\g}\widetilde{f}_2=0$, since both $\omega_3$ and $d_{\g}\widetilde{f}_2$ vanish.}}

\end{ex}



\section{Equivariance}

\begin{defi}[\cite{FRZ,MR3815227}] \label{equi} Let $G$ be a Lie group acting on $(M,\omega)$ preserving $\omega$.
	A homotopy moment map $f: \g \to L_{\infty}(M,\omega)$ is called \emph{equivariant} if for all $g \in G, p\in \Lambda^k \g,$ and $1\leq k \leq n$ 
	\begin{equation} \label{equiv}
	f_k(Ad_{g}p)=\Phi^*_{g}f_k(p),	
	\end{equation}
	where $\Phi^*_{g}$ denotes the pullback action. It is \emph{infinitesimally equivariant} or \emph{$\g$-equivariant} if and only if for all $x\in \g, p\in \Lambda^k\g$  and $1 \leq k \leq n$  
	\begin{equation}\label{infinitequiv}
	f_k(ad_xp)-\pounds_{v_x}f_k(p)=0,
	\end{equation}
	where $ad$ denotes the adjoint action of $\g$ on $\Lambda^k\g$. In complete analogy, a weak homotopy moment map is \emph{equivariant} if \eqref{equiv} holds for all $p\in P_{k,\g}$ resp. \emph{infinitesimally equivariant} if \eqref{infinitequiv} holds for all $x\in \g, p\in P_{k,\g}$ and $1 \leq k \leq n$.
\end{defi}

\begin{remark}
	For a connected Lie group $G$, a homotopy (or weak) moment map is equivariant if and only if it is infinitesimally equivariant. We will treat the case of infinitesimal equivariance in the sequel, the equivariant working in complete analogy.
\end{remark}

Consider the complex $\widetilde C^\g= (\Lambda^{\geq 1}\g^* \otimes \Omega(M))^{\g}$, consisting of all $\g$-invariant elements of \eqref{big complex}. The total differential \eqref{dtot} restricts to $\widetilde C^\g$, because $d_{\g}$ is $\g$-equivariant and $d$ commutes with the Lie derivative. Since the adjoint action $ad:\g\to  End(\Lambda \g)$ preserves the subspace of $\delta$-closed elements, it defines an action on $P_{\g}$ and thus on $\widehat C=P^*_{\g} \otimes \Omega(M)$. Again, the total differential $\widehat d_{tot}$ restricts to a differential on $\widehat C^\g$.

%

\begin{lemma} \label{omegainvariant}
	The element $\widetilde{\omega}$ (resp. $\widehat \omega$) lies in $\widetilde{C}^\g$ (resp. $\widehat{C}^\g$).
\end{lemma}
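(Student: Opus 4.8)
The plan is to reduce the statement to the $\g$\ndash invariance of each individual summand $\omega_k$, and then verify that invariance by a direct Cartan\ndash calculus computation using the hypothesis that the action preserves $\omega$. Recall that $\g$ acts on $\widetilde C=\Lambda^{\geq 1}\g^*\otimes\Omega(M)$ via the coadjoint action on $\Lambda\g^*$, via the Lie derivatives $\pounds_{v_x}$ on $\Omega(M)$, and via the Leibniz rule on the tensor product; concretely, under the identification of $\Lambda^k\g^*\otimes\Omega(M)$ with linear maps $\Lambda^k\g\to\Omega(M)$, an element $\alpha$ is $\g$\ndash invariant if and only if $\pounds_{v_x}(\alpha(p))=\alpha(ad_x p)$ for all $x\in\g$ and $p\in\Lambda^k\g$. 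Applying this to $\alpha=\omega_k$ and using linearity, it suffices to prove that $\pounds_{v_x}(\iota_{v_p}\omega)=\iota_{v_{ad_x p}}\omega$ for every decomposable $p=x_1\wedge\dots\wedge x_k$.

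For this I would invoke the graded commutator of $\pounds_{v_x}$ with contraction by a multivector field: for $p=x_1\wedge\dots\wedge x_k$ one has $\pounds_{v_x}\iota(v_p)-\iota(v_p)\pounds_{v_x}=\iota(\pounds_{v_x}v_p)$, which follows from the classical identity $[\pounds_{v_x},\iota_{v_{x_i}}]=\iota_{[v_x,v_{x_i}]}$ applied factor by factor to $\iota(v_{x_1}\wedge\dots\wedge v_{x_k})=\iota_{v_{x_1}}\cdots\iota_{v_{x_k}}$. Since $\g\to\mathfrak{X}(M)$ is a Lie algebra homomorphism we have $[v_x,v_{x_i}]=v_{[x,x_i]}$, hence $\pounds_{v_x}v_p=\sum_i v_{x_1}\wedge\dots\wedge v_{[x,x_i]}\wedge\dots\wedge v_{x_k}=v_{ad_x p}$. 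Combining this with $\pounds_{v_x}\omega=0$ (the action preserves $\omega$) gives $\pounds_{v_x}(\iota_{v_p}\omega)=\iota(v_p)\pounds_{v_x}\omega+\iota(v_{ad_x p})\omega=\iota_{v_{ad_x p}}\omega$, which is exactly the invariance of $\omega_k$. Summing over $k$ yields $\widetilde\omega\in\widetilde C^\g$.

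For $\widehat\omega$ I would use that the adjoint action $ad_x$ preserves each Lie kernel $P_{k,\g}=\ker\delta_k$ (recalled just before the statement), so the inclusion $P_{k,\g}\hookrightarrow\Lambda^k\g$ is $\g$\ndash equivariant, and therefore so is its dual, the restriction map $\Lambda^k\g^*\to P_{k,\g}^*$. Tensoring with $\mathrm{id}_{\Omega(M)}$ produces a $\g$\ndash equivariant chain map $\widetilde C\to\widehat C$, namely the map \eqref{chainmap}. Since each $\omega_k$ is $\g$\ndash invariant by the first part, its image $\omega_k|_{P_{k,\g}}$ is $\g$\ndash invariant in $P_{k,\g}^*\otimes\Omega(M)$, and hence $\widehat\omega=\sum_{k=1}^n(-1)^{k-1}\,\omega_k|_{P_{k,\g}}$ lies in $\widehat C^\g$.

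The only genuinely delicate point is bookkeeping of conventions: the sign appearing in the $\g$\ndash action on the ``$\mathrm{Hom}$'' side of $\Lambda^k\g^*\otimes\Omega(M)$, and the compatibility of $\iota(v_{x_1}\wedge\dots\wedge v_{x_k})$ with the iterated contractions $\iota_{v_{x_1}}\cdots\iota_{v_{x_k}}$ needed to propagate Cartan's identity through a $k$\ndash vector field. No real obstacle arises, however, since the whole argument ultimately rests on the two facts $\pounds_{v_x}\omega=0$ and $[v_x,v_y]=v_{[x,y]}$.
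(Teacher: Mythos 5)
Your proof is correct, and its overall architecture matches the paper's: reduce to the $\g$-invariance of each $\omega_k$, i.e.\ to the identity $\pounds_{v_x}(\iota_{v_p}\omega)=\iota_{v_{ad_xp}}\omega$, and then get the statement for $\widehat\omega$ from the $\g$-equivariance of the restriction $(res\otimes id)$, exactly as the paper does. Where you genuinely diverge is in how the central identity is established. You use the graded commutator $[\pounds_{v_x},\iota(v_p)]=\iota(\pounds_{v_x}v_p)$, obtained by iterating $[\pounds_{v_x},\iota_{v_{x_i}}]=\iota_{[v_x,v_{x_i}]}$ over the factors of $v_p$, together with $\pounds_{v_x}v_p=v_{ad_xp}$ and $\pounds_{v_x}\omega=0$; the only inputs are that $x\mapsto v_x$ is a Lie algebra homomorphism and that the action preserves $\omega$. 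The paper instead rewrites $\pounds_{v_x}\omega_k(p)$ as $d\iota_{v_x}\iota_{v_p}\omega=\pm\, d\iota(v_{x\wedge p})\omega$ and then invokes the extended Cartan formula (Lemma \ref{cartan}) and the algebraic identity $ad_x(p)=\delta(x\wedge p)-x\wedge\delta(p)$, so it additionally relies on $d\omega=0$ and on sign bookkeeping for $\delta$ and for contraction by multivectors. Your route is the more elementary and slightly more general one (it never touches $\delta$ or the closedness of $\omega$), while the paper's has the virtue of reusing machinery already set up (Lemma \ref{cartan}) and of making visible the interplay between $ad_x$ and the homology differential $\delta$ that recurs elsewhere in the paper. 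Your treatment of the $\widehat\omega$ case, via equivariance of the dual of the inclusion $P_{k,\g}\hookrightarrow\Lambda^k\g$, is exactly the paper's.
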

\begin{proof}
	We will prove the statement for $\widetilde{\omega}$, the statement for $\widehat \omega$, follows as the image of a $\g$-invariant element under the equivariant map $(res\otimes id)$ is necessarily $\g$-invariant. I.e., we have to show that for $k\in\{1,...,n+1\}$ $\omega_k$ is $\g$-invariant. We calculate
	\begin{align*}
	 \pounds_{v_x}\omega_k(p)=d\iota_{v_x}\iota_{v_p}\omega=d\iota_{p\wedge x}\omega
	 =(-1)^{k}d\iota_{x\wedge p}\omega\overset{(a)}=-\iota_{\delta(x\wedge p)}\omega\overset{(b)}=\iota_{ad_xp}\omega=\omega_k(ad_xp),
	 \end{align*}
	 where we used Lemma \ref{cartan} for equality (a) and Cartan's magic formula $ ad_x(p)=\delta(x\wedge p)-x\wedge \delta(p)$ for equality (b).	
\end{proof}

The correspondences between potentials and moment maps established in Propositions \ref{prop:hotexists} and \ref{prop:weakexist} carry over to the $\g$-equivariant setting and we have the following
 
\begin{thm}
	Let $\g$ act on $(M,\omega)$ by preserving $\omega$. The action admits
	\begin{enumerate}
		\item a $\g$-equivariant weak moment map if and only if $[\widehat\omega]=0\in H(\widehat C^\g)$
		\item a $\g$-equivariant homotopy moment map if and only if $[\widetilde\omega]=0\in H(\widetilde C^g)$
	\end{enumerate}
	Moreover, the respective moments are in one-to-one correspondence with potentials of the respective cohomology classes.
\end{thm}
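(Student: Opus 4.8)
The plan is to refine the bijections of Proposition~\ref{prop:hotexists} and Proposition~\ref{prop:weakexist} to the $\g$\nobreakdash-invariant subcomplexes $\widetilde C^\g$ and $\widehat C^\g$. The key point to isolate is this: under the correspondence $\widetilde f_k = \zeta(k)\widetilde\alpha_k$ of Proposition~\ref{prop:hotexists}, the homotopy moment map $(\widetilde f_k)$ is $\g$\nobreakdash-equivariant in the sense of~\eqref{infinitequiv} \emph{if and only if} its potential $\widetilde\alpha$ is a $\g$\nobreakdash-invariant element of $\widetilde C$; and analogously in the weak case, with $\widehat C$ in place of $\widetilde C$ and Proposition~\ref{prop:weakexist} in place of Proposition~\ref{prop:hotexists}.

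First I would make the $\g$\nobreakdash-action on $\widetilde C = \Lambda^{\geq 1}\g^{*}\otimes\Omega(M)$ explicit: it is the tensor product of the coadjoint action on $\Lambda^k\g^{*}$ (the one for which $d_\g$ is equivariant) and the Lie-derivative action $\pounds_{v_x}$ on $\Omega(M)$. Evaluating on multivectors, for $\beta\in\Lambda^k\g^{*}\otimes\Omega^{n-k}(M)$ and $p\in\Lambda^k\g$ one gets
\[
(\pounds_x\beta)(p) \;=\; \pounds_{v_x}\bigl(\beta(p)\bigr) - \beta(ad_x\,p),
\]
since the coadjoint action on $\Lambda^k\g^{*}$ is minus the dual of the derivation $ad_x$ on $\Lambda^k\g$. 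Hence $\beta\in\widetilde C^\g$ exactly when $\beta(ad_x\,p) = \pounds_{v_x}(\beta(p))$ for all $x\in\g$, $p\in\Lambda^k\g$ and all $k$. Multiplying through by the nonzero scalar $\zeta(k)$, this is literally condition~\eqref{infinitequiv} for the maps $\widetilde f_k = \zeta(k)\widetilde\alpha_k$; so $\widetilde\alpha\in\widetilde C^\g$ if and only if the associated homotopy moment map is $\g$\nobreakdash-equivariant. The identical computation, with $\Lambda^k\g^{*}$ replaced by $P_{k,\g}^{*}$ --- which carries the action dual to the restriction of $ad_x$ to $P_{k,\g}$, this restriction being well-defined because $ad_x$ preserves $\ker\delta$, as recalled before Lemma~\ref{omegainvariant} --- shows that $\widehat\alpha\in\widehat C^\g$ if and only if the weak moment map $\widehat f_k = \zeta(k)\widehat\alpha_k$ is $\g$\nobreakdash-equivariant.

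With this in hand the theorem is formal. By Lemma~\ref{omegainvariant} we have $\widetilde\omega\in\widetilde C^\g$ and $\widehat\omega\in\widehat C^\g$, and the total differentials restrict to these subcomplexes (as already noted, since $d_\g$ is $\g$\nobreakdash-equivariant and $d$ commutes with $\pounds_{v_x}$), so the classes $[\widetilde\omega]\in H^{n+1}(\widetilde C^\g)$ and $[\widehat\omega]\in H^{n+1}(\widehat C^\g)$ are defined. Combining Proposition~\ref{prop:hotexists} with the equivalence of the previous paragraph, the equation $\widetilde d_{tot}\widetilde\alpha = \widetilde\omega$ with $\widetilde\alpha\in\widetilde C^\g$ holds if and only if the $\widetilde f_k = \zeta(k)\widetilde\alpha_k$ are the components of a $\g$\nobreakdash-equivariant homotopy moment map. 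Therefore a $\g$\nobreakdash-equivariant homotopy moment map exists precisely when $\widetilde\omega$ admits a potential in $\widetilde C^\g$, that is, when $[\widetilde\omega]=0$ in $H^{n+1}(\widetilde C^\g)$; and, once this class vanishes, $\widetilde\alpha\mapsto(\widetilde f_k := \zeta(k)\widetilde\alpha_k)$ is the asserted one-to-one correspondence between $\widetilde d_{tot}$\nobreakdash-potentials of $\widetilde\omega$ in $\widetilde C^\g$ and $\g$\nobreakdash-equivariant homotopy moment maps (it is a bijection because $\zeta(k)\neq 0$ and the components $\widetilde\alpha_k$ determine and are determined by $\widetilde\alpha$). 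Statement~(1) follows verbatim, using Proposition~\ref{prop:weakexist} and $\widehat C^\g$ in place of Proposition~\ref{prop:hotexists} and $\widetilde C^\g$.

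I expect the only genuine obstacle to be the sign bookkeeping in the middle paragraph: one must check that the conventions for the coadjoint action on $\Lambda^k\g^{*}$, for $ad_x$ acting as a derivation on $\Lambda^k\g$, and for the constants $\zeta(k)$, combine so that $\g$\nobreakdash-invariance of the potential is exactly~\eqref{infinitequiv}, neither weaker nor stronger --- and, in the weak case, that the $\g$\nobreakdash-module structure used on $\widehat C = P_\g^{*}\otimes\Omega(M)$ really is the one induced from $ad$ on $P_\g$, which is the point at which one uses that $ad_x$ preserves $\ker\delta$. Beyond that, the argument is a formal restriction of Proposition~\ref{prop:hotexists}, Proposition~\ref{prop:weakexist} and Lemma~\ref{omegainvariant} to $\g$\nobreakdash-invariants.
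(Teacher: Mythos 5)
Your proposal is correct and follows exactly the route the paper intends: the paper gives no written-out proof, only the remark that the correspondences of Propositions~\ref{prop:hotexists} and~\ref{prop:weakexist} ``carry over'' to the invariant subcomplexes, combined with Lemma~\ref{omegainvariant}. Your middle paragraph simply supplies the (correct) verification that $\g$-invariance of the potential under the tensor-product action is precisely condition~\eqref{infinitequiv}, which is the detail the paper leaves implicit.
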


This theorem recovers the following result:
\begin{cor}[Proposition 7.3 and Theorem 7.4 in  \cite{MR3815227}]
	$\g$-equivariant weak moment maps are unique up to elements of $\bigoplus_{k=1}^n\left(P^*_{k,g}\otimes \Omega^{n-k+1}_{cl}(M)\right)^\g$. In particular, if these groups vanish, then $\g$-equivariant weak moment maps are unique.
\end{cor}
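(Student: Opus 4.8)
The plan is to read the statement straight off the theorem immediately preceding it, which identifies $\g$-equivariant weak moment maps with potentials of $[\widehat\omega]$ inside the invariant total complex $\widehat C^\g$: concretely, with $\g$-invariant elements $\widehat\alpha=\widehat\alpha_1+\dots+\widehat\alpha_n$, $\widehat\alpha_k\in P^*_{k,\g}\otimes\Omega^{n-k}(M)$ (the total degree of a weak moment map, cf.\ Proposition \ref{prop:weakexist}), satisfying $\widehat d_{tot}\widehat\alpha=\widehat\omega$, the passage to moment maps being the linear bijection $\widehat f_k=\zeta(k)\widehat\alpha_k$. If no potential exists, the uniqueness claim is vacuous. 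Otherwise, fixing one potential $\widehat\alpha$, the set of all potentials is the coset $\widehat\alpha+Z$ with $Z:=\ker(\widehat d_{tot})$ in that total degree inside $\widehat C^\g$; transporting along the linear bijection, the set of $\g$-equivariant weak moment maps is a torsor over the additive group $Z$ (equivalently: the componentwise difference of two $\g$-equivariant weak moment maps is closed and equivariant, hence lies in $Z$, and adding back any element of $Z$ preserves the property). So everything reduces to identifying $Z$, after which the ``in particular'' clause is automatic --- a torsor over the trivial group has at most one element.

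The second step is to compute $Z$. Here one uses that $\widehat d_{tot}=1\otimes d$ acts only on the $\Omega(M)$-factor and therefore preserves the decomposition into the summands $P^*_{k,\g}\otimes\Omega^{\bullet}(M)$; hence the cocycle condition on a total-degree-$n$ element $\mu=\mu_1+\dots+\mu_n$ with $\mu_k\in P^*_{k,\g}\otimes\Omega^{n-k}(M)$ splits componentwise. Since each $P^*_{k,\g}$ is finite-dimensional, $\ker\bigl(1\otimes d\colon P^*_{k,\g}\otimes\Omega^{\bullet}(M)\to P^*_{k,\g}\otimes\Omega^{\bullet+1}(M)\bigr)=P^*_{k,\g}\otimes\Omega^{\bullet}_{cl}(M)$, so $\mu_k\in P^*_{k,\g}\otimes\Omega^{n-k}_{cl}(M)$; intersecting with the $\g$-invariants is harmless because $d$ commutes with every Lie derivative $\pounds_{v_x}$, so that passing to $\g$-invariants commutes with $\ker(1\otimes d)$. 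Therefore $Z=\bigoplus_{k=1}^{n}\bigl(P^*_{k,\g}\otimes\Omega^{n-k}_{cl}(M)\bigr)^{\g}$, which is the group in the statement.

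I do not expect a genuine obstacle: given the preceding theorem, the argument is pure bookkeeping. The point I would be most careful about --- though it is already built into that theorem --- is the dictionary translating ``$\widehat\alpha\in\widehat C^\g$'' into ``each component $\widehat f_k$ is infinitesimally equivariant in the sense of \eqref{infinitequiv}''; this is the equivariant refinement of Proposition \ref{prop:weakexist} and rests on the same facts as Lemma \ref{omegainvariant}, namely that $d_\g$ is $\g$-equivariant and that $d$ commutes with Lie derivatives. I would also make explicit that ``unique up to $X$'' is read in the torsor sense (difference of any two lies in $X$, and $X$ acts by addition), both directions being immediate since adding a $\widehat d_{tot}$-cocycle of the right total degree to a potential again yields a potential. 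One last caveat worth flagging: $\bigl(P^*_{k,\g}\otimes\Omega^{n-k}_{cl}(M)\bigr)^{\g}$ must not be replaced by $\bigl(P^*_{k,\g}\bigr)^{\g}\otimes\bigl(\Omega^{n-k}_{cl}(M)\bigr)^{\g}$; it is the invariants of the tensor product, which is precisely what the statement asserts.
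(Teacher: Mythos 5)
Your argument is correct and is essentially the paper's own (implicit) one: the preceding theorem identifies $\g$-equivariant weak moment maps with potentials of $\widehat\omega$ in $\widehat C^\g$, so they form a torsor over the $\widehat d_{tot}$-cocycles of the relevant total degree, and since $\widehat d_{tot}=1\otimes d$ preserves each summand $P^*_{k,\g}\otimes\Omega^\bullet(M)$ and commutes with the $\g$-action, that cocycle space is $\bigoplus_{k=1}^n\bigl(P^*_{k,\g}\otimes\Omega^{n-k}_{cl}(M)\bigr)^\g$. One correction to your last step, however: this is \emph{not} literally ``the group in the statement'' --- the corollary as printed has $\Omega^{n-k+1}_{cl}(M)$, whereas your computation gives $\Omega^{n-k}_{cl}(M)$. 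Your degree is the right one: $\widehat f_k$ takes values in $\Omega^{n-k}(M)$, so the componentwise difference of two weak moment maps is a closed $(n-k)$-form-valued map ($n-k+1$ is the degree of $\iota_{v_p}\omega$, not of $\widehat f_k(p)$). The printed exponent appears to be an off-by-one slip, and you should flag the discrepancy rather than silently identifying the two groups; the ``in particular'' clause and everything else in your write-up is fine.
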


\subsection{Equivariantization}
In this subsection, we will determine the condition that determines when homotopy and weak moment maps can be made equivariant. We review the Theorem 4.10 from \cite{MR3815227} in the terms of this paper and derive its analogue for homotopy moment maps. To do this, we need some Lie algebra cohomology:\\

Let $\mathcal M$ be a $\g$-module and $\mathcal N$ a $\g$-submodule. Let $\alpha$ in $\mathcal M$ be an element satisfying $x\cdot \alpha\in \mathcal N$ for all $x \in \g$. We can ask ourselves, whether $\alpha$ can be changed by an element $\beta$ in $\mathcal N$ such that $(\alpha-\beta)\in \mathcal M^\mathfrak g$. This is equivalent to finding $\beta \in \mathcal N$, such that $x\cdot\alpha=x\cdot \beta$ for all $x \in \mathfrak g$. The map $x\mapsto x\cdot \alpha$ interpreted as the element $d_{g,\mathcal M}(\alpha)$ in $\mathfrak g^ *\otimes \mathcal N$ is closed with respect to the Lie algebra cohomology differential $d_{\g, \mathcal N}$, and potentials correspond to elements $\beta\in \mathcal N$ such that $d_{\g,\mathcal M}(\alpha)=d_{\g,\mathcal N}(\beta)$.\\

Now, take $\mathcal M=\widehat C$ and  $\alpha\in \widehat C$ a potential of $\widehat{\omega}$ corresponding to some weak moment map. Then $d_{\g,\mathcal M}(\alpha)\in \mathcal N$, where $\mathcal N=P_{\geq 1,\g}^*\otimes \Omega_{cl}(M)$ is the space of admissible changes for weak moment maps (cf. Proposition \ref{prop:weakexist}). The above discussion implies: 

\begin{thm}[Theorem 4.10 in \cite{MR3815227}] Let $\widehat{\alpha}\in \widehat C$ be a potential of $\widehat{\omega}$ corresponding to a weak moment map $\widehat{ f}$. Then $d_{\g,\widehat C}(\alpha)$ yields a well-defined cohomology class in $H^1(\mathfrak g, P_{\geq 1,\g}^*\otimes \Omega_{cl}(M))$. Furthermore, $[d_{\g,\widehat C}(\alpha)]\in H^1(\mathfrak g, P_{\geq 1,\g}^*\otimes \Omega_{cl}(M))$ vanishes if and only if there exists a $\g$-equivariant weak moment map.
\end{thm}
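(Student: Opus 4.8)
The plan is to deduce this theorem directly from the general Lie-algebra-cohomology mechanism set up in the paragraph immediately preceding the statement, applied to the specific module $\mathcal M = \widehat C = P_{\geq 1,\g}^*\otimes \Omega(M)$ with $\g$-action induced by the adjoint action on $P_{\geq 1,\g}$ and the Lie derivative on $\Omega(M)$. So the first step is bookkeeping: record that $\widehat C$ is a $\g$-module, that $\widehat{\omega}\in\widehat C$ is $\g$-invariant (this is Lemma \ref{omegainvariant}), and that a weak moment map $\widehat f$ corresponds — via the bijection of Proposition \ref{prop:weakexist} — to a potential $\widehat\alpha\in\widehat C$ with $\widehat d_{tot}\widehat\alpha=\widehat\omega$. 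The key structural point I would isolate next is that the $\g$-action on $\widehat C$ commutes with $\widehat d_{tot}=1\otimes d$ (since $\pounds_{v_x}$ commutes with $d$ and $ad_x$ acts only on the first factor), so $\widehat d_{tot}(x\cdot\widehat\alpha)=x\cdot\widehat d_{tot}\widehat\alpha=x\cdot\widehat\omega=0$; hence $x\cdot\widehat\alpha$ is $\widehat d_{tot}$-closed. Combined with the fact (from Proposition \ref{prop:weakexist}) that the space of admissible changes of $\widehat\alpha$ — i.e.\ $\widehat d_{tot}$-closed elements of $\widehat C$ — is exactly $\cN:=P_{\geq 1,\g}^*\otimes\Omega_{cl}(M)$, we conclude $x\cdot\widehat\alpha\in\cN$ for all $x\in\g$, so $\widehat\alpha$ is precisely an element of $\cM$ of the type considered in the general discussion.

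Next I would translate equivariance into the language of that discussion. By Definition \ref{equi} and the dictionary of Proposition \ref{prop:weakexist}, $\widehat f$ is infinitesimally $\g$-equivariant if and only if $x\cdot\widehat\alpha=0$ for all $x\in\g$, i.e.\ $\widehat\alpha\in\widehat C^\g$; and an arbitrary weak moment map differs from $\widehat f$ by an element $\beta\in\cN$ (again Proposition \ref{prop:weakexist}). So a $\g$-equivariant weak moment map exists iff some $\beta\in\cN$ satisfies $x\cdot(\widehat\alpha-\beta)=0$ for all $x$, i.e.\ $x\cdot\widehat\alpha = x\cdot\beta$ for all $x$. Now interpret the assignment $x\mapsto x\cdot\widehat\alpha$ as the element $d_{\g,\widehat C}(\widehat\alpha)\in\g^*\otimes\widehat C$; since $x\cdot\widehat\alpha\in\cN$, this is actually a $1$-cochain valued in $\cN$, i.e.\ $d_{\g,\widehat C}(\widehat\alpha)\in\g^*\otimes\cN$, and it is $d_{\g,\cN}$-closed (as the image under $d_{\g,\widehat C}$ of an element of $\widehat C$, using that $\widehat C$ is a module and $d_{\g,\cM}^2=0$, more precisely because $d_{\g,\cN}\circ d_{\g,\widehat C}$ restricted to this cochain vanishes — this is the standard "$d^2=0$ relative to a submodule" computation). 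Potentials of this class under $d_{\g,\cN}$ are by definition elements $\beta\in\cN$ with $d_{\g,\cN}(\beta)=d_{\g,\widehat C}(\widehat\alpha)$, i.e.\ $x\cdot\beta=x\cdot\widehat\alpha$, which is exactly the sought condition. Hence a $\g$-equivariant weak moment map exists iff $[d_{\g,\widehat C}(\widehat\alpha)]=0$ in $H^1(\g,\cN)=H^1(\g,P_{\geq 1,\g}^*\otimes\Omega_{cl}(M))$.

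Finally I would address well-definedness of the class, i.e.\ independence of the choice of potential $\widehat\alpha$. If $\widehat\alpha'=\widehat\alpha+\beta_0$ is another potential of $\widehat\omega$, then $\beta_0\in\cN$, so $d_{\g,\widehat C}(\widehat\alpha')=d_{\g,\widehat C}(\widehat\alpha)+d_{\g,\widehat C}(\beta_0)=d_{\g,\widehat C}(\widehat\alpha)+d_{\g,\cN}(\beta_0)$ (the last equality because $\beta_0\in\cN$ and the module structure restricts), so the two cochains differ by a $d_{\g,\cN}$-coboundary and represent the same class in $H^1(\g,\cN)$. This shows $[d_{\g,\widehat C}(\widehat\alpha)]$ depends only on the weak moment map $\widehat f$, completing the argument. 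The main obstacle — though really a matter of care rather than depth — is the very identification "admissible changes of $\widehat\alpha$ $=$ $\widehat d_{tot}$-closed elements of $\widehat C$ $=$ $\cN$"; one must check that a $1\otimes d$-closed element of $P_{\geq 1,\g}^*\otimes\Omega(M)$ is genuinely of the form $P_{\geq 1,\g}^*\otimes\Omega_{cl}(M)$, which follows by writing elements in a basis of $P_{\geq 1,\g}^*$ as in the proof of the Corollary to Theorem \ref{existence}, and that the $\g$-module structures on $\widehat C$ and $\cN$ are compatible so that $d_{\g,\widehat C}$ restricted to $\cN$ equals $d_{\g,\cN}$. Everything else is a direct application of the general Lie-algebra-cohomology discussion quoted just above the statement.
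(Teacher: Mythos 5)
Your proposal is correct and follows essentially the same route as the paper, which derives the theorem by applying the general submodule mechanism ($\mathcal M=\widehat C$, $\mathcal N=P_{\geq 1,\g}^*\otimes\Omega_{cl}(M)$) set up in the paragraphs immediately preceding the statement. You merely make explicit the bookkeeping the paper leaves implicit (that $x\cdot\widehat\alpha$ is $\widehat d_{tot}$-closed hence lands in $\mathcal N$, the identification of $\mathcal N$ with the space of admissible changes via Proposition \ref{prop:weakexist}, and independence of the choice of potential), all of which is consistent with the paper's intended argument.
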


An analogous construction works for homotopy moment maps:

\begin{prop} Let $\widetilde{\alpha}\in \widetilde C$ be a potential of $\widetilde {\omega}$ corresponding to a homotopy moment map $\widetilde{ f}$. Then $\widetilde C(\alpha)$ yields a well-defined cohomology class in $H^1(\mathfrak g,\mathcal N)$, where $\mathcal N$ is the subspace of $\widetilde d_{tot}$-closed elements in $\Lambda^{k\geq 1}\g\otimes \Omega(M)$. Furthermore, $[d_{\g, \mathcal N}(\alpha)]\in H^1(\mathfrak g,\mathcal N)$ vanishes if and only if there exists a $\g$-equivariant homotopy moment map.
\end{prop}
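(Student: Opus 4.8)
The plan is to run the abstract Lie-algebra-cohomology argument recalled just before the statement, mirroring the proof of Theorem 4.10, with $\mathcal M=\widetilde C$ (in the total degree $n$ in which moment-map potentials live) and $\mathcal N=\ker(\widetilde{d}_{tot})\subseteq\mathcal M$. Both are $\g$-modules: $\widetilde C$ carries the action combining the Lie derivatives $\pounds_{v_x}$ on $\Omega(M)$ with the coadjoint action on $\Lambda^{\geq 1}\g^*$, the total differential $\widetilde{d}_{tot}=d_\g\otimes 1+1\otimes d$ is $\g$-equivariant (as $d_\g$ is and $d$ commutes with $\pounds_{v_x}$), hence $\mathcal N$ is a submodule. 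By Proposition \ref{prop:hotexists} the homotopy moment map $\widetilde f$ determines the potential $\widetilde\alpha\in\widetilde C$ with $\widetilde{d}_{tot}\widetilde\alpha=\widetilde\omega$. The first step is to check that $x\cdot\widetilde\alpha\in\mathcal N$ for every $x\in\g$; this is immediate, since $\widetilde{d}_{tot}(x\cdot\widetilde\alpha)=x\cdot(\widetilde{d}_{tot}\widetilde\alpha)=x\cdot\widetilde\omega=0$ by Lemma \ref{omegainvariant}. Hence $d_{\g,\widetilde C}(\widetilde\alpha)$, read as an element of $\g^*\otimes\mathcal N$, is defined; as explained before the statement it is automatically $d_{\g,\mathcal N}$-closed (since $d_{\g,\mathcal N}$ is the restriction of $d_{\g,\widetilde C}$ to $\g^*\otimes\mathcal N$ and $d_{\g,\widetilde C}$ squares to zero), and therefore represents a class in $H^1(\g,\mathcal N)$.

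Next I would establish that this class does not depend on the chosen homotopy moment map. If $\widetilde\alpha$ and $\widetilde\alpha'$ are the potentials of $\widetilde\omega$ attached to two homotopy moment maps, then $\widetilde{d}_{tot}(\widetilde\alpha-\widetilde\alpha')=0$, so $\beta:=\widetilde\alpha-\widetilde\alpha'\in\mathcal N$; since the $\g$-action preserves $\ker(\widetilde{d}_{tot})$ we also have $x\cdot\beta\in\mathcal N$, so $d_{\g,\widetilde C}(\widetilde\alpha)-d_{\g,\widetilde C}(\widetilde\alpha')=d_{\g,\mathcal N}(\beta)$ is $d_{\g,\mathcal N}$-exact and the two representatives give the same class. (Put differently, the potentials of $\widetilde\omega$ form a torsor over $\mathcal N$, and $\alpha\mapsto d_{\g,\widetilde C}(\alpha)$ descends to a well-defined element of $H^1(\g,\mathcal N)$ attached to the action itself.)

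Finally, by the abstract discussion, $[d_{\g,\mathcal N}(\widetilde\alpha)]=0$ in $H^1(\g,\mathcal N)$ if and only if there is $\beta\in\mathcal N$ with $\widetilde\alpha-\beta\in\widetilde C^\g$. When this holds, $\widetilde\alpha-\beta$ is again a potential of $\widetilde\omega$ (because $\beta$ is $\widetilde{d}_{tot}$-closed), hence by Proposition \ref{prop:hotexists} it corresponds to a homotopy moment map $\widetilde f'$; and $\widetilde f'$ is $\g$-equivariant exactly because $\widetilde\alpha-\beta$ is $\g$-invariant --- unwinding the module structure, invariance of the $\Lambda^k\g^*\otimes\Omega^{n-k}(M)$-component says $\pounds_{v_x}\big((\widetilde\alpha-\beta)_k(p)\big)=(\widetilde\alpha-\beta)_k(ad_x p)$ for all $x\in\g$, $p\in\Lambda^k\g$, $1\le k\le n$, which up to the nonzero factor $\zeta(k)$ is precisely condition \eqref{infinitequiv} for $\widetilde f'_k=\zeta(k)(\widetilde\alpha-\beta)_k$. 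Conversely, a $\g$-equivariant homotopy moment map has a $\g$-invariant potential $\widetilde\alpha'$, and then $\widetilde\alpha-\widetilde\alpha'\in\mathcal N$ witnesses that $d_{\g,\mathcal N}(\widetilde\alpha)$ is $d_{\g,\mathcal N}$-exact. Since Lemma \ref{omegainvariant}, Proposition \ref{prop:hotexists} and the abstract cohomology lemma are all already in hand, the argument is essentially assembly; the only point needing genuine care is keeping the two $\g$-module structures (coadjoint-tensor-Lie-derivative on $\widetilde C$ versus the equivariance condition \eqref{infinitequiv} on the $\widetilde f_k$) aligned, and fixing the total degree $n$ so that the correspondence "potential of $\widetilde\omega$" $\leftrightarrow$ "homotopy moment map" is exactly the bijection of Proposition \ref{prop:hotexists}.
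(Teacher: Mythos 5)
Your proposal is correct and follows exactly the route the paper intends: it instantiates the abstract Lie-algebra-cohomology discussion preceding the statement with $\mathcal M=\widetilde C$ and $\mathcal N=\ker(\widetilde d_{tot})$, using Lemma \ref{omegainvariant} and Proposition \ref{prop:hotexists} in the same way (the paper itself leaves this as "an analogous construction works"). Your write-up simply makes explicit the details the paper omits, including the check $x\cdot\widetilde\alpha\in\mathcal N$ and the translation between $\g$-invariance of the potential and condition \eqref{infinitequiv}.
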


\begin{remark}
We end the paper by noting that, in symplectic geometry, considering actions of connected Lie groups, the conditions for being a homotopy moment map and an equivariant moment map coincide, i.e., a symplectic homotopy moment map is automatically equivariant.
\end{remark}

%
%

\bibliographystyle{habbrv} 
\bibliography{Lie2Alg2019}

\end{document}